\newcommand{\C}{\mathbb{C}}
\newcommand{\R}{\mathbb{R}}
\newcommand{\cov}{\nabla}
\newcommand{\curv}{\widetilde{R}}
\newcommand{\id}{\mathrm{Id}}
\DeclareMathOperator{\im}{\mathrm{Im}}
\newcommand{\lie}{\mathcal{L}}
\DeclareMathOperator{\rank}{\mathbf{rk}}
\DeclareMathOperator{\trace}{\mathrm{tr}}
\newtheorem{theorem}{Theorem}[section]
\newtheorem{proposition}[theorem]{Proposition}
\newtheorem{proposition-definition}[theorem]{Proposition-Definition}
\theoremstyle{remark}
\newtheorem{remark}[theorem]{Remark}
\theoremstyle{definition}
\newtheorem{definition}[theorem]{Definition}
\title{Nearly Sasakian manifolds revisited}
\author[B. Cappelletti-Montano]{Beniamino Cappelletti-Montano}
 \address{Dipartimento di Matematica e Informatica, Universit\`a degli Studi di
 Cagliari, Via Ospedale 72, 09124 Cagliari, Italy}
 \email{b.cappellettimontano@gmail.com}
\author[A. De Nicola]{Antonio De Nicola}
 \address{Dipartimento di Matematica, Universit\`a degli Studi di Salerno, Via Giovanni Paolo II 132, 84084 Fisciano, Italy}
 \email{antondenicola@gmail.com}
\author[G. Dileo]{Giulia Dileo}
 \address{Dipartimento di Matematica, Universit\`a degli Studi di
 Bari Aldo Moro, Via E. Orabona 4, 70125 Bari, Italy}
 \email{giulia.dileo@uniba.it}
\author[I. Yudin]{Ivan Yudin}
 \address{CMUC, Department of Mathematics, University of Coimbra, 3001-501 Coimbra, Portugal}
 \email{yudin@mat.uc.pt}
\subjclass[2000]{Primary 53C25, 53D35}
\thanks{This work was partially supported by 
Fondazione di Sardegna and Regione Autonoma della  Sardegna, Project GESTA and KASBA,
 by MICINN (Spain) grant MTM2015- 64166-C2-2-P,
by CMUC -- UID/MAT/00324/2013, funded by the Portuguese
 Government through FCT/MEC and co-funded by the European Regional Development
Fund through the Partnership Agreement PT2020
and by the exploratory research project in the frame of Programa Investigador FCT IF/00016/2013.}
\begin{document}
\maketitle

\begin{center}
\emph{Dedicated to Prof. David E. Blair on the occasion of his 78th birthday}
\end{center}

\begin{abstract}
We provide a new, self-contained and more conceptual proof of the result that an
almost contact metric manifold of dimension greater than $5$ is Sasakian if and
only if it is nearly Sasakian.  \end{abstract}

\section{Introduction}
A Sasakian manifold $M$ is a contact metric manifold that satisfies a normality
condition, encoding the integrability of a canonical almost complex structure on
the product $M\times \mathbb{R}$. Several equivalent characterizations of this
class of manifolds, in terms of Riemannian cone, or transversal structure, or
curvature, are also known. In particular one can show that an almost contact
metric structure $(g,\phi,\xi,\eta)$ is Sasakian if and only if the covariant
derivative of the endomorphism $\phi$ satisfies
\begin{equation}\label{sasakian-condition}
(\nabla_{X}\phi)Y-g(X,Y)\xi+\eta(Y)X=0,
\end{equation}
for all vector fields $X,Y\in\Gamma(TM)$. A relaxation of this notion was
introduced by Blair, Showers and Yano in \cite{blair_nearly_sasakian}, under the
name of nearly Sasakian manifolds, by requiring that just the symmetric part of
\eqref{sasakian-condition} vanishes. Later on, several important properties of
nearly Sasakian manifolds were discovered by Olszak (\cite{olszak}).  Nearly
Sasakian manifolds may be considered as an odd-dimensional analogue of
nearly K\"{a}hler manifolds. In fact, the prototypical example of nearly
Sasakian manifold is the $5$-sphere as totally umbilical hypersurface of
$\mathbb{S}^6$, endowed with the almost contact metric structure induced by the
well-known nearly K\"{a}hler structure of $\mathbb{S}^6$. Nevertheless, in
recent years several differences between nearly Sasakian and nearly K\"{a}hler
geometry were pointed out.  In particular, in \cite{mg} it was proved that the
$1$-form $\eta$ of any  nearly Sasakian manifold is necessarily a  contact form,
while the fundamental $2$-form of a nearly K\"{a}hler manifold is never
symplectic unless the manifold is K\"{a}hler. 
 A peculiarity of nearly Sasakian five dimensional manifolds, which are not Sasakian, is
 that upon rescaling the metric one can define a  Sasaki-Einstein  structure on them.
In fact one has an SU(2)-reduction of the  frame bundle.
 Conversely, starting  with a five dimensional manifold with a Sasaki-Einstein
 SU(2)-structure it is possible to construct a one-parameter family of 
 nearly
 Sasakian non-Sasakian manifolds. Thus the theory of nearly Sasakian
 non-Sasakian manifolds is essentially equivalent to
 the one of Sasaki-Einstein manifolds.


 Concerning other
dimensions,  there have been many attempts of finding explicit examples of
proper nearly Sasakian non-Sasakian manifolds until the recent result obtained in \cite{agi} showing
that every nearly Sasakian structure of dimension greater than five is always 
Sasakian.  Such result depends on the early work~\cite{mg} by the
first and third authors, which in turn draws many properties proved in ~\cite{olszak}. 
This makes the proof to be spread over several different texts with different
notation. 

The aim of this note is to provide a complete and streamlined proof of the
aforementioned dimensional restriction on nearly Sasakian non-Sasakian manifolds.  We will
also pinpoint where the positivity of the Riemannian metric is used. For this
purpose we work in the more general setting of pseudo-Riemannian geometry. We
will always assume that the metric is non-degenerate.

\bigskip

This paper was written on occasion of the conference \emph{RIEMain in Contact},
held in Cagliari (Italy), 18--22 June 2018.

\section{Preliminaries}
\subsection{Tensor calculus notation.} 

In this section review the notation for the tensor calculus we use throughout the
paper. 

Given a permutation $\sigma \in \Sigma_q$, we will denote by the same symbol
the $(q,q)$-tensor $TM^{\otimes q}\to TM^{\otimes q} $ defined by $\sigma
(X_1\otimes \dots \otimes X_q) = X_{\sigma^{-1}(1)} \otimes \dots \otimes
X_{\sigma^{-1} (q)}$.

Let $\cov$ be a covariant derivative. 
It is easy to show  that $\cov \sigma =0$. 
If $T$ is an arbitrary $(p,q)$-tensor, then $\cov T$ can be considered as a  $(p,q+1)$-tensor. 
We define recursively the $(p,q+k)$-tensors $\cov^k T$ by $\cov^{k+1} T := \cov
(\cov^k T)$. 

We will 
use the following convention regarding the arguments of $\cov^k T$
\begin{equation*}
(\cov^k T)(X_1\otimes \dots \otimes X_{q+k}) : = (\cov^k_{X_1, \dots,
X_k}T)(X_{k+1}\otimes \dots \otimes
X_{q+k}). 
\end{equation*}
Given $T_1$ and $T_2$ of valencies $(p_1,q_1)$, $(p_2,q_2)$, respectively, and such
that  $q_1\ge p_2$, we define
the tensor $T_1\circ T_2$ of type $(p_1, q_1-p_2+q_2)$ by 
\begin{equation*}
(T_1\circ T_2) (X_1,\dots X_{q_1-p_2}, Y_1,\dots Y_{q_2}) = T_1(X_1,\dots,
X_{q_1-p_2}, T_2(Y_1,\dots , Y_{q_2})).
\end{equation*}

%

Note that with our convention for $\cov T$, if $T_1$ and $T_2$ are tensors of valencies $(p_1,q_1)$ and $(p_2,q_2)$ respectively, then
\begin{equation*}
\label{covtens}
\cov(T_1\otimes T_2) = \cov T_1 \otimes T_2 + (T_1 \otimes \cov T_2) \circ
(q_1+1, \dots, 2,1),
\end{equation*}
where we used the cycle notation for permutations, as we will do throughout the
paper.
Moverover, one has
\begin{equation}
\label{covcomp}
\cov(T_1\circ T_2) = \cov T_1 \circ T_2 + T_1 \circ \cov T_2 \circ
(q_1-p_2+1,\dots, 2,1). 
\end{equation}
Of course if $q_1= p_2$, then we get just $\cov(T_1 \circ T_2) = (\cov
T_1)\circ T_2 + T_1 \circ (\cov T_2)$. 
Suppose $T_2=\sigma$ is a permutation in $\Sigma_{q_1}$. Then~\eqref{covcomp} 
should be used with caution since in the term $\cov T_1\circ \sigma$, we have to
consider $\sigma$ as an element of $\Sigma_{q_1}$, not as an element of
$\Sigma_{q_1 +1}$. Let us denote by $s$ the inclusion $\Sigma_{q_1}$ into
$\Sigma_{q_1+1}$ defined by $s(\sigma)(i) = \sigma(i-1) +1$, $i\ge 2$,
$s\left( \sigma \right) (1)=1$. Then $\cov (T \circ \sigma) = \cov T \circ
s(\sigma)$. In the computations below, we will always substitute $\sigma$ with
$s(\sigma)$ when needed, so that if in the composition chain the tensor
$T$ of type $(p,q)$ is followed by a permutation $\sigma$ then $\sigma$ is
always in
$\Sigma_q$.

\subsection{Nearly Sasakian manifolds}
The definition of Sasakian manifolds was motivated by study of local properties
of K\"{a}hler manifolds. Namely, \emph{Sasakian manifold} is an odd dimensional
Riemannian manifold $(M, g)$ such that the metric cone $(M \times \R_+, t g + dt^2)$ is K\"{a}hler. 
Sasakian manifolds can also be characterized as a subclass of almost contact
metric manifolds. 
\begin{definition}
An~\emph{almost contact metric manifold} is a tuple
$(M^{2n+1},g,\phi,\xi,\eta)$, where 
\begin{enumerate}[1)]
\item $g$ is a Riemannian metric; 
\item $\phi$ is a $(1,1)$-tensor;
\item $\xi$ is a vector field on $M$;
\item $\eta$ is a $1$-form on $M$
\end{enumerate}
such that 
\begin{enumerate}[$i$)]
\item $\phi^2 = -\id + \xi\otimes \eta$
\item  $\eta(X)=g(X,\xi)$, $g(\xi,\xi)=1$;
\item $\phi$ is skew symmetric, i.e. $g\circ (\phi \otimes \id) = - g \circ
(\id \otimes \phi)$. 
\end{enumerate}
\end{definition}
From the definition it follows that $\phi \xi =0$ and $\eta \circ \phi =0$. 

By~\cite[Theorem~6.3]{blairbook} the following can be used as an alternative definition of
Sasakian manifolds. 
\begin{definition}
A~\emph{Sasakian} manifold is an almost contact metric manifold $(M,
g,\phi,\xi,\eta)$ such that 
\begin{equation}
\label{cov_phi}
(\cov_X \phi) Y = g(X,Y) \xi - \eta(Y)X. 
\end{equation}
\end{definition}
Nearly Sasakian manifolds where introduced in~\cite{blair_nearly_sasakian} as a
generalization of Sasakian manifolds by relaxing the
condition~\eqref{cov_phi}. 
\begin{definition}
A~\emph{nearly Sasakian} manifold is an almost contact metric manifold $(M,
g,\phi,\xi,\eta)$ such that 
\begin{equation}
\label{cov_phi_nearly_sasakian}
(\cov_X \phi) X = g(X,X) \xi - \eta(X)X. 
\end{equation}
\end{definition}
By polarizing at $X$ the condition~\eqref{cov_phi_nearly_sasakian} can be
restated in the form 
\begin{equation}
\label{cov_phi_nearly_sasakian_}
(\cov \phi -  \xi\otimes g + \eta\otimes \id) ( 1 + (1,2)) =0. 
\end{equation}

As explained in the introduction, we will work in the more general setting of pseudo-Riemannian geometry. 
The definitions of \emph{nearly pseudo-Sasakian} and 
\emph{pseudo-Sasakian} manifolds are the same as above with only distinction
that now $g$ is a pseudo-Riemannian metric. 

We start with establishing some simple properties of nearly pseudo-Sasakian
manifolds. In the case of nearly Sasakian manifolds they were proved in~\cite{blair_nearly_sasakian}. 
\begin{proposition}
\label{easyFacts}
If $(M, g,\phi,\eta)$ is a nearly pseudo-Sasakian manifold then
\begin{enumerate}[i)]
\item for any vector field $X$, the vector field $\cov_X \xi$ is orthogonal to
$\xi$, equivalently $\eta \circ \cov \xi =0$; 
\item $\cov_\xi \xi =0$ and $\cov_\xi \eta =0$;
\item the operators $\cov_\xi \phi$ and $\phi\circ \cov_\xi \phi$ are skew-symmetric and anticommute with $\phi$; 
\item $\cov_\xi \phi = \phi (\phi +\cov \xi)$ and $\phi + \cov \xi + \phi \circ \cov_\xi \phi =0$. 
\item $(\cov \xi)^2 + \id - \xi \otimes \eta = (\cov_\xi \phi)^2 = (\phi
\cov_\xi \phi)^2$, in
particular, $(\cov \xi)^2$ commutes with $\phi$; 
\item $\xi$ is a Killing vector field or, equivalently, $\cov \xi$ is a
skew-symmetric operator; 
\item $d\eta = 2 \cov \eta = -2 g \circ \cov \xi$. 
\end{enumerate}
\end{proposition}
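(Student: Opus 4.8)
The plan is to treat the seven statements essentially in the order listed, extracting each from the defining identity \eqref{cov_phi_nearly_sasakian} together with the algebraic constraints of the almost contact structure, and postponing the Killing property to the end since it is the one genuinely requiring two independent identities. Throughout I write $h:=\cov\xi$ for the $(1,1)$-tensor $X\mapsto\cov_X\xi$ and $P(X,Y):=(\cov_X\phi)Y$, and I record two facts used repeatedly: differentiating $g(\phi X,Y)=-g(X,\phi Y)$ shows each $\cov_X\phi$ is skew-symmetric, and the polarized form \eqref{cov_phi_nearly_sasakian_} reads $P(X,Y)+P(Y,X)=2g(X,Y)\xi-\eta(X)Y-\eta(Y)X$.

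Statement (i) is immediate from $g(\xi,\xi)=1$. For (ii) I would set $X=\xi$ in \eqref{cov_phi_nearly_sasakian}, so that $(\cov_\xi\phi)\xi=0$; since $\phi\xi=0$ gives $(\cov_\xi\phi)\xi=-\phi(\cov_\xi\xi)$, the vector $\cov_\xi\xi$ lies in $\ker\phi=\langle\xi\rangle$, and (i) forces it to vanish, whence $\cov_\xi\eta=0$ by metric compatibility. Differentiating $\phi\xi=0$ gives the basic relation $P(X,\xi)=-\phi hX$; feeding this into the polarization at $Y=\xi$ produces $\cov_\xi\phi=\phi\circ(\phi+h)$ after using $\phi^2=-\id+\xi\otimes\eta$, which is the first identity in (iv). Composing once more with $\phi$ and using $\eta\circ\phi=0$ together with $\eta\circ h=0$ (from (i)) collapses $\phi\circ\cov_\xi\phi$ to $-(\phi+h)$, the second identity in (iv). For (iii), skew-symmetry of $\cov_\xi\phi$ is the general fact above, while differentiating $\phi^2=-\id+\xi\otimes\eta$ along $\xi$ and invoking (ii) yields the anticommutation $\phi\circ\cov_\xi\phi+\cov_\xi\phi\circ\phi=0$; skew-symmetry of $\phi\circ\cov_\xi\phi$ and its anticommutation with $\phi$ then follow formally.

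From (iii) and (iv) I would extract the two operator identities $\phi h\phi=h+2\phi$ and, composing once more, $\phi h+h\phi=2(\id-\xi\otimes\eta)$. These drive (v): since $\cov_\xi\phi$ anticommutes with $\phi$ one gets $(\phi\circ\cov_\xi\phi)^2=(\cov_\xi\phi)^2$ directly (using $\phi^2=-\id+\xi\otimes\eta$ and $(\cov_\xi\phi)\xi=0$), while $(\phi\circ\cov_\xi\phi)^2=(\phi+h)^2$ expands, via $\phi h+h\phi=2(\id-\xi\otimes\eta)$, to $h^2+\id-\xi\otimes\eta$; commutation of $(\cov\xi)^2$ with $\phi$ is then automatic from the anticommutation of $\cov_\xi\phi$ with $\phi$.

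The main obstacle is (vi): none of the identities above is individually strong enough to force $h$ to be skew-symmetric, so I would combine two of them. Pairing the polarization with $\xi$ and using $P(X,\xi)=-\phi hX$ gives the symmetric relation $g(hX,\phi Y)+g(hY,\phi X)=-2g(X,Y)+2\eta(X)\eta(Y)$, while $\phi h\phi=h+2\phi$ gives $g(h\phi X,\phi Y)=-g(hX,Y)-2g(\phi X,Y)$. Restricting to the contact distribution $\ker\eta$ (where $\phi^2=-\id$; the $\xi$-directions being handled trivially by (i) and (ii)) and substituting $Y\mapsto\phi Y$ in the first relation, both expressions compute $g(h\phi Y,\phi X)$ and their comparison forces $g(hX,Y)=-g(hY,X)$. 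This is the delicate step, being the only place where two independent consequences of the defining identity must be played against each other rather than used in isolation. Finally (vii) is a consequence of (vi): for the torsion-free Levi-Civita connection $d\eta(X,Y)=(\cov_X\eta)Y-(\cov_Y\eta)X$, and $(\cov_X\eta)Y=g(hX,Y)$ is now skew, so $d\eta=2\cov\eta$, while the same formula rewritten as $g$ composed with $\cov\xi$ gives $\cov\eta=-g\circ\cov\xi$.
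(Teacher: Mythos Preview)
Your proof is correct, and for items (i)--(v) and (vii) it follows essentially the same route as the paper. The one substantive difference is in (vi). You call the Killing property ``the main obstacle'' and spend a paragraph comparing two bilinear identities on $\ker\eta$ to squeeze out $g(hX,Y)=-g(hY,X)$. That argument is valid, but the paper dispatches (vi) in one line: by the second identity in (iv) you already have $\cov\xi = -\phi - \phi\circ\cov_\xi\phi$, and by (iii) both summands on the right are $g$-skew-symmetric; hence $\cov\xi$ is skew-symmetric. In other words, the two ``independent consequences'' you play against each other are not needed here---the skewness of $h$ is an immediate corollary of results you have already established, not a delicate step. Your detour through $\phi h\phi = h+2\phi$ and the symmetric pairing identity recovers the same conclusion but at considerably greater cost.
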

\begin{proof}
Applying $\cov_X$ to $1= g(\xi,\xi)$, we get 
\begin{equation*}
0= g(\cov_X \xi, \xi) + g(\xi, \cov_X\xi ) = 2 g(\cov_X \xi, \xi) = 2 (\eta
\circ \cov \xi)  (X), 
\end{equation*}
which is equivalent to $\cov_X \xi \perp \xi$. 

To show that $\cov_\xi \xi=0$, we proceed as follows. 
Fist we substitute $X=\xi$ in $(\cov_X \phi)X = g(X,X)\xi -\eta(X)X$ and obtain $(\cov_\xi \phi ) \xi  = 0.$
As $\phi \xi=0$, this implies $\phi (\cov_\xi \xi)=0$.
Therefore 
\begin{equation*}
0 = \phi^2 (\cov_\xi \xi) = -\cov_\xi \xi + \eta(\cov_\xi \xi) \xi.
\end{equation*}
Since $\eta \circ \cov \xi=0$, the above equation implies 
\begin{equation}\label{bsy3.2}
\cov_\xi \xi = \eta(\cov_\xi \xi) \xi =  0,\quad \quad \cov_\xi \eta = \cov_\xi
(g\circ (\xi
\otimes \id)) =  g \circ (\cov_\xi \xi \otimes \id) = 0. 
\end{equation}
To see that $\cov_\xi \phi$ is skew-symmetric it is enough to apply $\cov_\xi$ to the equation
$g \circ (\phi \otimes \id + \id \otimes \phi) =0$. 
To show that $\cov_\xi \phi$ anticommutes with $\phi$ we apply $\cov_\xi$ to
the equation $\phi^2 = -\id + \xi \otimes \eta$ and use $\cov_\xi \xi =0$,
$\cov_\xi \eta =0$. 
Now, that $\phi\cov_\xi \phi$ is skew-symmetric and anticommutes with $\phi$
follows from the following computation
\begin{equation*}
\begin{aligned}
g(\phi  (\cov_\xi \phi) X , Y) &= - g( (\cov_\xi \phi) X, \phi Y) =
g(X, (\cov_\xi \phi)  \phi Y  ) = -g (X, (\phi \cov_\xi \phi)Y) \\[1ex] \phi (\phi
\cov_\xi \phi) & =  -\phi ( (\cov_\xi \phi) \phi) = - (\phi \cov_\xi \phi )
\phi.
\end{aligned}
\end{equation*}
Next we show that $\cov_\xi \phi = \phi (\phi + \cov \xi)$. 
First we polarize $(\cov_X \phi) X = g(X,X) \xi -\eta(X)X$ with respect to
$X$, and get that for any two vector fields $X$ and $Y$
\begin{equation}\label{polarization_cov_phi}
(\cov_X \phi) Y + (\cov_Y \phi)X = 2g(X,Y) \xi - \eta(X) Y -
\eta(Y)X.
\end{equation}
Taking $Y=\xi$ in the above equation, we obtain
\begin{equation}
\label{covxphixi}
(\cov_X \phi) \xi  + (\cov_\xi \phi) X = \eta (X) \xi - X = \phi^2 X.
\end{equation}
As $\phi \xi =0$, we have $(\cov _X \phi)\xi = \cov_X (\phi \xi) - \phi
(\cov_X \xi) = -\phi (\cov_X \xi) = - (\phi \circ \cov \xi) X$. Thus~\eqref{covxphixi} can be rewritten as $\cov_\xi \phi = \phi
(\phi + \cov \xi)$. 
Now since $\eta \circ \phi =0$ and $\eta \circ \cov \xi =0$, we get
\begin{equation*}
\phi\circ \cov_\xi \phi = \phi^2 (\phi + \cov
\xi) = -\phi +   \xi \otimes (\eta \circ \phi)  - \cov \xi  + \xi \otimes (\eta \circ \cov
\xi) = - \phi - \cov \xi. 
\end{equation*}
Next we show that $(\cov_\xi \phi)^2 = (\cov \xi)^2 + \id - \xi \otimes \eta$.
Since $\phi$ anticommutes with $\cov_\xi \phi$, we get 
\begin{equation*}
\begin{aligned}
(\cov \xi)^2 & = ( -\phi - \phi \circ \cov_\xi\phi )^2 
= \phi^2 + \phi^2 \circ \cov_\xi \phi + \phi \circ \cov_\xi \phi \circ \phi +
\phi \circ \cov_\xi \phi \circ \phi \circ \cov_\xi \phi  
\\ & = - \id + \eta \otimes \xi + (\cov_\xi \phi)^2 - (\cov_\xi \phi)^2 \xi \otimes
\eta 
 = -\id + \eta \otimes \xi + (\cov_\xi \phi)^2,
\end{aligned}
\end{equation*}
where in the last step we used $(\cov_\xi \phi)\xi = \cov_\xi (\phi \xi) - \phi
(\cov_\xi \xi) =0$. 
Since $\phi$ anticommutes with $\cov_\xi \phi$, we get
\begin{equation*}
(\phi \cov_\xi \phi)^2 =  - (\cov_\xi \phi)^2 \phi^2  = 
(\cov_\xi \phi)^2 + (\cov_\xi \phi)\xi \otimes \eta = (\cov_\xi \phi)^2,
\end{equation*}
where we used in the last step $\phi \xi =0$ and $\cov_\xi \xi=0$. 

Next we show that $\phi$ commutes with $(\cov \xi)^2$. 
Since $\phi$ anticommutes with $\cov_\xi \phi$,
it commutes with $(\cov_\xi
\phi)^2$. Thus to show that $(\cov \xi)^2$ commutes with $\phi$, we only have
to check that $\phi$ commutes with $\xi \otimes \eta$. But, as we saw,
$\phi \xi =0$ and $\eta \circ \phi =0$. Thus $\phi \circ (\xi \otimes \eta) = 0
= (\xi \otimes \eta) \circ \phi$. 

Next we prove that $\xi$ is a Killing vector field, which, in view of 
\begin{equation*}
\lie_\xi g = g \circ (\cov \xi \otimes \id + \id \otimes \cov \xi), 
\end{equation*}
is equivalent to the claim that $\cov \xi$ is skew-symmetric. 
But $\cov \xi = -\phi - \phi \circ \cov_\xi \phi$ is a sum of two
skew-symmetric operators, and therefore is skew-symmetric. 

Since $\cov \xi$ is skew-symmetric, we get
\begin{equation*}
\begin{aligned}
d\eta (X,Y) &= (\cov_X\eta)(Y) - (\cov_Y\eta )(X) = 
g (Y, \cov_X \xi) - g ( X, \cov_Y \xi) \\&  = - 2 (g\circ \cov \xi)(X,Y)\\ &  = 
2 g (Y,\cov_X \xi) = 2 (\cov_X ( g \circ \xi)) (Y) = 2 (\cov_X \eta)(Y).  
\end{aligned}
\end{equation*}
\end{proof}
Next we establish that the $1$-form $\eta$ of any nearly Sasakian manifold is contact. 
We use in this proposition that the metric $g$ is positively defined, since this
permits to conclude that the square of $g$-skew-symmetric operator has
non-positive spectrum. This is not true for a general pseudo-Riemannian metric. 
\begin{theorem}[{\cite{mg}}]\label{nearly_implies_contact}
Let $(M^{2n+1}, g, \phi, \xi, \eta)$ be a nearly Sasakian manifold.
Then
\begin{enumerate}[$i$)]  
\item the eigenvalues of $(\cov \xi)^2$ are non-positive and $0$ has multiplicity one
in the spectrum of $(\cov \xi)^2$; 
\item the operator $(\cov \xi)$ has rank $2n$; 
\item $\eta$ is a contact form.
 \end{enumerate}
\end{theorem}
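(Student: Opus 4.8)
The plan is to exploit the pointwise linear algebra of the skew-symmetric operator $\cov\xi$ together with the identities collected in Proposition~\ref{easyFacts}. First I would record that, by items (i), (ii) and (vi) of that proposition, $\cov\xi$ is $g$-skew-symmetric, annihilates $\xi$, and has image contained in the orthogonal complement $\mathcal{D}:=\ker\eta=\xi^{\perp}$. Consequently $\cov\xi$ preserves the $g$-orthogonal splitting $TM=\langle\xi\rangle\oplus\mathcal{D}$, acting as $0$ on $\langle\xi\rangle$ and as a skew-symmetric endomorphism of $\mathcal{D}$; the same holds for $H:=\cov_\xi\phi$, which is skew-symmetric and satisfies $H\xi=0$ by item (iii).

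For part (i) I would argue as follows. Because $g$ is positive definite, the square of any $g$-skew-symmetric operator $A$ satisfies $g(A^2X,X)=-g(AX,AX)\le 0$, so $A^2$ is symmetric and negative semi-definite; applied to $A=\cov\xi$ this gives the non-positivity of the eigenvalues of $(\cov\xi)^2$, and applied to $A=H$ it gives $H^2\le 0$. This is exactly where positivity of the metric is indispensable. To pin down the multiplicity of the eigenvalue $0$, I would restrict the identity of item (v), namely $(\cov\xi)^2=H^2-\id+\xi\otimes\eta$, to $\mathcal{D}$: there the term $\xi\otimes\eta$ vanishes, so $(\cov\xi)^2|_{\mathcal{D}}=H^2|_{\mathcal{D}}-\id\le-\id$, which is invertible. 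Hence $\ker(\cov\xi)^2=\langle\xi\rangle$ and $0$ has multiplicity one.

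Parts (ii) and (iii) then follow quickly. Since $\ker\cov\xi\subseteq\ker(\cov\xi)^2=\langle\xi\rangle$ and $\cov_\xi\xi=0$, we get $\ker\cov\xi=\langle\xi\rangle$, so $\rank(\cov\xi)=2n$, proving (ii). For (iii) I would use item (vii), $d\eta=-2\,g\circ\cov\xi$: the contraction $\iota_\xi d\eta$ vanishes because $\eta\circ\cov\xi=0$, and on $\mathcal{D}$ the $2$-form $d\eta$ is represented through $g$ by the operator $\cov\xi|_{\mathcal{D}}$, which we have just shown to be invertible. Therefore $d\eta|_{\mathcal{D}}$ is non-degenerate, $(d\eta)^n$ is a volume form on $\mathcal{D}$, and $\eta\wedge(d\eta)^n\neq 0$, so $\eta$ is a contact form.

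I expect the only genuine difficulty to be the multiplicity-one statement in (i): it is not a formal consequence of skew-symmetry but relies on combining the structural identity $(\cov\xi)^2=H^2-\id$ on $\mathcal{D}$ with the sign information $H^2\le 0$ coming from positive definiteness --- without which $H^2-\id$ could acquire a nontrivial kernel on $\mathcal{D}$ and $\eta$ would fail to be contact.
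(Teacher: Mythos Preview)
Your argument is correct and follows essentially the same route as the paper: both hinge on the identity $(\cov\xi)^2+\id-\xi\otimes\eta=(\cov_\xi\phi)^2$ from Proposition~\ref{easyFacts} together with the negative semi-definiteness of $(\cov_\xi\phi)^2$. The paper phrases the multiplicity-one step as a rank inequality on all of $TM$ and finishes part~(iii) with an explicit adapted basis, whereas you restrict to $\mathcal{D}=\xi^\perp$ and argue non-degeneracy of $d\eta|_{\mathcal{D}}$ directly; these are cosmetic differences, and your version is arguably cleaner.
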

\begin{proof}
By Proposition~\ref{easyFacts}, the operator $\cov_\xi\phi$ is skew-symmetric,
and therefore the eigenvalues of $(\cov_\xi \phi)^2 - \id$ are negative. By the
same proposition $(\cov \xi)^2 - \xi \otimes \eta = (\cov_\xi \phi)^2 -\id$. 
This shows that the spectrum of $A := (\cov \xi)^2 - \xi \otimes \eta$ is
negative and $A$ has rank $2n+1$. Since $\rank(\xi \otimes \eta)=1$ and 
for any two operators $\rank(B+C) \le \rank(B) + \rank(C)$, we conclude that
$2n+ 1 = \rank(A) \le \rank( (\cov \xi)^2 ) + 1$, i.e. the rank of $(\cov \xi)^2 $
is at least $2n$. This shows also that multiplicity of $0$ in the spectrum of
$(\cov \xi)^2$ cannot be greater than one. Since $\xi$ is in the kernel of
$\cov \xi$ we get that the spectrum of $(\cov \xi)^2$ contains $0$, it has
multiplicity one, and $(\cov \xi)^2$ has rank $2n$. 
As $\cov \xi$ is
skew-symmetric by Proposition~\ref{easyFacts}, the rank of $\cov \xi$ coincides
with the rank of $(\cov \xi)^2$. 
Therefore $\rank (\cov \xi) = 2n$. Thus at every point of $M$, there exists
an adapted basis of $T_x M$ of the form $\xi$, $X_1$, \dots, $X_n$,
$Y_1$, \dots, $Y_n$, with the property that $\cov_{X_k}\xi =\lambda_k Y_k$ and
$\cov_{Y_k} \xi = - \lambda_k X_k$ for some $\lambda_k >0$. 
Then
\begin{equation*}
(\eta \wedge (d\eta)^n) ( \xi, X_1, Y_1, \dots, X_n, Y_n) =
n!\cdot 2^n \cdot \prod_{k=1}^n  \lambda_k \not=0.  
\end{equation*}
\end{proof}

\section{Curvature properties of nearly Sasakian manifolds} 
In this section we reestablish curvature properties of nearly Sasakian
manifolds obtained by Olszak in~\cite{olszak}. The main consequence of these
properties, used in the rest of the paper, is an explicit formula for $\cov^2
\xi$ in terms of $\cov \xi$. 

We will use the following notation for curvature tensors
\begin{equation*}
\begin{aligned}
& R_{X,Y}  := \cov^2_{X,Y} - \cov^2_{Y,X},\mbox{ i.e. } R = \cov^2 \circ
( 1 - (1, 2))\\[1ex]
& \curv (X,Y,Z,W)  := g \left(\, R_{X,Y}Z\,,\,W\right). 
\end{aligned}
\end{equation*}
In particular $R\xi$ denotes the $(1,2)$-tensor on $M$ given by
$(R\xi)(X,Y)= R_{X,Y} \xi$. Also
\begin{equation*}
\begin{aligned}
(\curv \circ (1,4,3,2) ) (X,Y,Z,W) & = \curv (Y,Z,W,X) = g (R_{Y,Z}W, X ) = g
(X,R_{Y,Z}W) \\&  =
( g \circ R) (X,Y,Z,W), 
\end{aligned}
\end{equation*}
that is
\begin{equation}\label{curvviagR}
\curv \circ (1,4,3,2) = g \circ R. 
\end{equation}
For every covariant tensor $T\in \Gamma(TM^{\otimes k})$ and endomorphism
$\phi$, we define $i_\phi T \in \Gamma (TM^{\otimes k})$ by 
\begin{equation*}
i_\phi T = T \circ ( \phi \otimes \id^{\otimes (k-1)} + \id \otimes \phi
\otimes \id^{\otimes (k-2)} + \dots + \id^{\otimes (k-1)}\otimes \phi). 
\end{equation*}
In the following series of propositions we show that $i_\phi R$
vanishes on every nearly pseudo-Sasakian manifold.
This generalizes the Olszak's result obtained
in~\cite{olszak} for nearly Sasakian manifolds.  
\begin{proposition}
\label{prop:iphicurv}
Let $(M,g)$ be a pseudo-Riemannian manifold and $\phi$ a linear endomorphism of
$TM$. Then the tensor $i_\phi \curv$ has the following symmetries
\begin{equation}\label{iphisymmetries}
(i_\phi \curv) (1 + (1,2)) = 0,\quad (i_\phi \curv) (1 - (1,3)(2,4)) = 0,\quad
(i_\phi \curv) (1 + (1,2,3) + (1,3,2)) = 0 . 
\end{equation}
\end{proposition}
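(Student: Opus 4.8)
The plan is to reduce the three asserted symmetries of $i_\phi\curv$ to the classical symmetries of the Riemann curvature tensor $\curv$ of the Levi-Civita connection, by observing that the operation $i_\phi$ is compatible with composition by permutations. Note that the statement is purely formal: $\phi$ is an arbitrary endomorphism and $\curv$ is the curvature of $g$ alone, so no nearly Sasakian hypothesis enters.

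First I would record the three standard symmetries of $\curv$ in the permutation notation of the paper. Since $R_{X,Y}=-R_{Y,X}$, the curvature is antisymmetric in its first pair of arguments, i.e. $\curv(1+(1,2))=0$. The pair symmetry $\curv(X,Y,Z,W)=\curv(Z,W,X,Y)$ reads $\curv(1-(1,3)(2,4))=0$, and the first Bianchi identity $R_{X,Y}Z+R_{Y,Z}X+R_{Z,X}Y=0$ becomes $\curv(1+(1,2,3)+(1,3,2))=0$. These are the only facts about $\curv$ that the proof will use, and they hold for any non-degenerate metric.

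The key step is a commutation lemma. Writing $\Phi:=\phi\otimes\id^{\otimes 3}+\id\otimes\phi\otimes\id^{\otimes 2}+\id^{\otimes 2}\otimes\phi\otimes\id+\id^{\otimes 3}\otimes\phi$, so that $i_\phi\curv=\curv\circ\Phi$, I claim that $\Phi$ commutes with the action of every permutation $\sigma\in\Sigma_4$ on $TM^{\otimes 4}$, that is $\Phi\circ\sigma=\sigma\circ\Phi$. Indeed, denoting by $P_i$ the $i$-th summand of $\Phi$ (the insertion of $\phi$ in slot $i$), a direct check in the paper's convention gives $\sigma\circ P_i\circ\sigma^{-1}=P_{\sigma(i)}$, since conjugation by $\sigma$ relocates the single $\phi$ from slot $i$ to slot $\sigma(i)$; summing over $i$ merely permutes the four summands among themselves and leaves $\Phi$ invariant. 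Consequently, for every $\sigma\in\Sigma_4$,
\[
(i_\phi\curv)\circ\sigma=\curv\circ\Phi\circ\sigma=\curv\circ\sigma\circ\Phi=(\curv\circ\sigma)\circ\Phi=i_\phi(\curv\circ\sigma),
\]
and by linearity of $i_\phi$ the same identity holds with $\sigma$ replaced by any linear combination of permutations.

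Finally I would apply this to each of the three symmetries. For instance $(i_\phi\curv)(1+(1,2))=i_\phi\bigl(\curv(1+(1,2))\bigr)=i_\phi(0)=0$, and identically for $1-(1,3)(2,4)$ and for $1+(1,2,3)+(1,3,2)$, yielding all three symmetries of $i_\phi\curv$ at once. I do not expect a genuine obstacle here: the only point requiring care is the commutation lemma, and even there the work is just the bookkeeping of verifying that $\sigma\circ P_i\circ\sigma^{-1}=P_{\sigma(i)}$; once that is in place the conclusion is immediate.
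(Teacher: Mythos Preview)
Your proof is correct and follows exactly the paper's approach: the paper's entire argument is the single observation that $\Phi=\phi\otimes\id^{\otimes 3}+\id\otimes\phi\otimes\id^{\otimes 2}+\id^{\otimes 2}\otimes\phi\otimes\id+\id^{\otimes 3}\otimes\phi$ commutes with every element of $\Sigma_4$, whence the symmetries of $\curv$ transfer to $i_\phi\curv$. You have simply made the commutation lemma and the three classical symmetries of $\curv$ more explicit than the paper does.
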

\begin{proof}
Since $\phi\otimes\id^{\otimes 3} + \id \otimes \phi \otimes \id^{\otimes 2} +
\id^{\otimes 2} \otimes \phi \otimes \id + \id^{\otimes 3} \otimes \phi $
commutes with every element of $\Sigma_4$, the result follows from the
corresponding symmetries of the curvature tensor $\curv$. 
\end{proof}

The following proposition lists a well-known property of tensors with certain symmetries (see e.g. \cite[page 198]{KN1}).
\begin{proposition}
\label{symmetries}
Let $M$ be a manifold and $T$ a $(0,4)$-tensor on $M$ such that 
\begin{equation*}
T (1 + (1,2)) = 0,\quad T (1 - (1,3)(2,4)) = 0,\quad
T (1 + (1,2,3) + (1,3,2)) = 0 .
\end{equation*}
If $T(X,Y,X,Y)=0$ for any pair of vector fields $X$, $Y$ then $T=0$. 
\end{proposition}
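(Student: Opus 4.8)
The plan is to show that the hypothesis $T(X,Y,X,Y)=0$, together with the three listed symmetries, forces $T$ to be totally antisymmetric, after which $T=0$ drops out of the first Bianchi identity by a trivial multiplicity count. First I would unpack the three symmetry relations into pointwise identities: the relation $T(1+(1,2))=0$ is skew-symmetry in the first two arguments; the relation $T(1-(1,3)(2,4))=0$ is the pair symmetry $T(X,Y,Z,W)=T(Z,W,X,Y)$; and the third relation is the first Bianchi identity $T(X,Y,Z,W)+T(Y,Z,X,W)+T(Z,X,Y,W)=0$. Composing skew-symmetry in the first pair with the pair symmetry also yields skew-symmetry in the last two arguments, so $T$ is already skew under the transpositions of positions $(1,2)$ and $(3,4)$.

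Next I would polarize the diagonal hypothesis twice. Substituting $X\mapsto X+Z$ into $T(X,Y,X,Y)=0$ and discarding the two pure terms gives $T(X,Y,Z,Y)+T(Z,Y,X,Y)=0$; by the pair symmetry the two summands coincide, so dividing by $2$ leaves $T(X,Y,Z,Y)=0$ for all $X,Y,Z$. Polarizing this identity in turn, via $Y\mapsto Y+W$, again kills the pure terms and produces $T(X,Y,Z,W)+T(X,W,Z,Y)=0$, that is, skew-symmetry under interchanging the second and fourth arguments.

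At this stage $T$ is skew under the transpositions of positions $(1,2)$, $(3,4)$ and $(2,4)$. These three transpositions generate the full symmetric group on $\{1,2,3,4\}$, since the corresponding graph with edges $\{1,2\}$, $\{3,4\}$, $\{2,4\}$ is connected; as the sign is a homomorphism, $T$ must pick up a factor $-1$ under every transposition, i.e. $T$ is totally antisymmetric in its four arguments. I regard this upgrade from ``vanishing on the diagonal'' to ``alternating'' as the conceptual heart of the argument, and the extra $(2,4)$-skewness coming from the second polarization is exactly what makes it go through.

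Finally I would feed total antisymmetry into the Bianchi identity. For an alternating $T$, the terms $T(Y,Z,X,W)$ and $T(Z,X,Y,W)$ arise from $T(X,Y,Z,W)$ by a $3$-cycle of the first three arguments and its square, both even permutations, so each equals $T(X,Y,Z,W)$; the Bianchi relation then reads $3\,T(X,Y,Z,W)=0$, whence $T=0$. The only points needing care are the bookkeeping of the pure terms in the two polarizations and the verification that the three transpositions generate $S_4$; everything else is immediate.
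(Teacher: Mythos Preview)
Your argument is correct. The paper does not supply its own proof of this proposition; it merely records it as a well-known fact with a reference to Kobayashi--Nomizu, so there is no in-paper proof to compare against. Your two polarizations to obtain $T(X,Y,Z,Y)=0$ and then $T(X,Y,Z,W)+T(X,W,Z,Y)=0$ are exactly the classical steps, and your way of finishing---observing that the transpositions $(1\,2)$, $(3\,4)$, $(2\,4)$ generate $S_4$, so $T$ is alternating, and then that the Bianchi identity collapses to $3T=0$---is a clean packaging of the standard conclusion.
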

In the next proposition we relate the tensors $i_\phi \curv$
 and $R\phi$. 
\begin{proposition}
\label{rphi}
Let $(M,g)$ be a pseudo-Riemannian manifold. If  $\phi\colon TM\to TM $ is
skew-symmetric with respect $g$ then $i_\phi \curv = g \circ (R\phi \otimes \id)
(1 + (1,3)(2,4)) $. 
\end{proposition}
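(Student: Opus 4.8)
The plan is to directly unwind the definition of $i_\phi \curv$ and recognize the resulting expression as a symmetrized version of $g\circ(R\phi\otimes\id)$. Since $\phi$ is skew-symmetric, $g\circ(\phi\otimes\id)=-g\circ(\id\otimes\phi)$, so each of the four terms in $i_\phi\curv$ can be massaged so that $\phi$ sits in the last argument slot, at the cost of a sign. The idea is that the four slots of $\curv$ split into two \emph{pairs}: slots $1,2$ are the ``commutator'' slots producing $R$, while slots $3,4$ are the ``metric'' slots coming from $g(R\,\cdot\,,\,\cdot\,)$. Inserting $\phi$ into slots $3$ or $4$ should, after using skew-symmetry, combine into the single tensor $g\circ(R\phi\otimes\id)$, and inserting $\phi$ into slots $1$ or $2$ should give the same thing transported by the permutation $(1,3)(2,4)$, which swaps the two pairs.

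Concretely, first I would write
\begin{equation*}
i_\phi\curv = \curv\circ(\phi\otimes\id^{\otimes 3}) + \curv\circ(\id\otimes\phi\otimes\id^{\otimes 2}) + \curv\circ(\id^{\otimes 2}\otimes\phi\otimes\id) + \curv\circ(\id^{\otimes 3}\otimes\phi).
\end{equation*}
Using $\curv(X,Y,Z,W)=g(R_{X,Y}Z,W)$ together with skew-symmetry of $\phi$ with respect to $g$, the fourth term $\curv\circ(\id^{\otimes 3}\otimes\phi)$ becomes $g(R_{X,Y}Z,\phi W)=-g(\phi R_{X,Y}Z,W)$, while the third term $\curv\circ(\id^{\otimes 2}\otimes\phi\otimes\id)$ gives $g(R_{X,Y}\phi Z,W)=g((R\phi)_{X,Y}Z,W)$ after folding $\phi$ into $R$. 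My expectation is that these two terms in slots $3,4$ assemble, via the skew-symmetry identity $g\circ(\id^{\otimes 3}\otimes\phi)=-g\circ(\phi\otimes\id)\circ(\text{appropriate swap})$ applied inside the curvature symmetries, into exactly $g\circ(R\phi\otimes\id)$.

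For the remaining two terms, with $\phi$ in slots $1$ and $2$, I would invoke the pair-swap symmetry $\curv\circ(1,3)(2,4)=\curv$ (the interchange symmetry of the curvature tensor, which is the content of $\curv(1-(1,3)(2,4))=0$ from Proposition~\ref{prop:iphicurv}). This identifies slots $1,2$ with slots $3,4$, so the contributions of $\phi$ in the first two slots are the images under $(1,3)(2,4)$ of its contributions in the last two slots. Hence the whole sum becomes $g\circ(R\phi\otimes\id)\bigl(1+(1,3)(2,4)\bigr)$, which is the claim.

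The main obstacle I anticipate is purely bookkeeping: keeping the permutation conventions straight and confirming that the signs from skew-symmetry of $\phi$ align correctly with the signs built into the curvature symmetries, so that the two ``metric-slot'' terms add rather than cancel, and likewise for the two ``commutator-slot'' terms. In particular I would need to verify carefully that folding $\phi$ into $R$ as $R\phi$ (so that $(R\phi)(X,Y)=R_{X,Y}\circ\phi$ acting appropriately) matches the intended valency, and that applying the interchange symmetry reproduces precisely the permutation $(1,3)(2,4)$ rather than some other element of $\Sigma_4$. Once the sign accounting is pinned down, the identity should follow mechanically from the three symmetries of $\curv$ together with the single skew-symmetry relation for $\phi$.
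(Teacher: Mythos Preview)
Your approach is essentially identical to the paper's: its proof consists of the single displayed identity $g((R_{X,Y}\phi)Z,W)=\curv(X,Y,\phi Z,W)+\curv(X,Y,Z,\phi W)$, which is exactly your combination of the slot-$3$ and slot-$4$ terms using skew-symmetry of $\phi$, followed by ``and symmetries of $\curv$'', i.e.\ the interchange $(1,3)(2,4)$ to handle slots $1$ and $2$. The only point to tighten is the meaning of $R\phi$: in this paper $R\phi$ is the curvature acting on the tensor $\phi$, so $(R_{X,Y}\phi)Z=R_{X,Y}(\phi Z)-\phi(R_{X,Y}Z)$, \emph{not} the composition $R_{X,Y}\circ\phi$; with that definition your third and fourth terms do not separately equal $g\circ(R\phi\otimes\id)$ but their sum does, exactly as you surmise.
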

\begin{proof}
The result follows from
\begin{equation*}
\begin{aligned}
g ( (R_{X,Y}\phi ) Z,W) & = g ( R_{X,Y}(\phi Z), W) - g (\phi (R_{X,Y}Z) ,W) \\
& = 
\curv(X,Y,\phi Z,W) + \curv (X,Y,Z,\phi W)
\end{aligned}
\end{equation*}
and symmetries of $\curv$. 
\end{proof}
\begin{proposition}
\label{nps_verification}
\!\! If 
$(M,g,\phi,\xi,\eta)$ is a nearly pseudo-Sasakian manifold then ${i_\phi
\curv =0}$.
Equivalently, $g \circ (R\phi \otimes \id) ( 1 + (1,3)(2,4)) =0$. 
\end{proposition}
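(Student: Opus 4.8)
The plan is to exploit the algebraic symmetries of $i_\phi\curv$ together with the Ricci identity for $\phi$, reducing the whole statement to a single ``diagonal'' contraction that collapses by skew-symmetry. Since $\phi$ is skew-symmetric, Proposition~\ref{rphi} shows that the two displayed formulations are equivalent, so it is enough to prove $i_\phi\curv=0$. By Proposition~\ref{prop:iphicurv} the tensor $i_\phi\curv$ enjoys the three symmetries required in Proposition~\ref{symmetries}; hence it suffices to check that $(i_\phi\curv)(X,Y,X,Y)=0$ for all $X,Y$. Expanding the definition of $i_\phi$ and using the pair symmetry of $\curv$ (as in the proof of Proposition~\ref{rphi}), evaluation on $(X,Y,X,Y)$ gives
\begin{equation*}
(i_\phi\curv)(X,Y,X,Y)=2\,g\bigl((R_{X,Y}\phi)X,Y\bigr),
\end{equation*}
so everything reduces to showing that $g\bigl((R_{X,Y}\phi)X,Y\bigr)=0$.

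To access the curvature I would pass to second covariant derivatives via the Ricci identity $R_{X,Y}\phi=\cov^2_{X,Y}\phi-\cov^2_{Y,X}\phi$ and set
\begin{equation*}
B(a,b,c,d):=g\bigl((\cov^2_{a,b}\phi)c,d\bigr),
\end{equation*}
so that $g\bigl((R_{X,Y}\phi)X,Y\bigr)=B(X,Y,X,Y)-B(Y,X,X,Y)$. Two structural relations for $B$ are then needed. The first comes from applying $\cov^2_{a,b}$ to the skew-symmetry identity $g\circ(\phi\otimes\id)=-g\circ(\id\otimes\phi)$ and using $\cov g=0$: each $\cov^2_{a,b}\phi$ is again $g$-skew-symmetric, i.e. $B$ is skew in its last two slots. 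The second comes from covariantly differentiating the polarized nearly Sasakian identity~\eqref{polarization_cov_phi}. Writing its right-hand side as $Q(X,Y)=2g(X,Y)\xi-\eta(X)Y-\eta(Y)X$ and using $\cov g=0$ together with $\cov\eta=g\circ\cov\xi$, differentiation yields
\begin{equation*}
B(a,b,c,d)+B(a,c,b,d)=C(a,b,c,d),
\end{equation*}
where
\begin{equation*}
C(a,b,c,d)=2g(b,c)\,g(\cov_a\xi,d)-g(\cov_a\xi,b)\,g(c,d)-g(\cov_a\xi,c)\,g(b,d)
\end{equation*}
is symmetric in its middle two slots.

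To finish, I would feed the repeated arguments into these two relations. Since $B$ is skew in its last two slots, $B(X,X,Y,Y)=0$, so the symmetric relation with $(a,b,c,d)=(X,Y,X,Y)$ gives $B(X,Y,X,Y)=C(X,Y,X,Y)$; taking instead $(a,b,c,d)=(Y,X,X,Y)$ gives $2B(Y,X,X,Y)=C(Y,X,X,Y)$. Hence
\begin{equation*}
g\bigl((R_{X,Y}\phi)X,Y\bigr)=C(X,Y,X,Y)-\tfrac12\,C(Y,X,X,Y).
\end{equation*}
Now I invoke Proposition~\ref{easyFacts}$(vi)$, which says $\cov\xi$ is skew-symmetric, so that $g(\cov_X\xi,X)=g(\cov_Y\xi,Y)=0$ and $g(\cov_Y\xi,X)=-g(\cov_X\xi,Y)$. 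Substituting the explicit $C$ and simplifying collapses the expression to $g(X,Y)\bigl(g(\cov_X\xi,Y)+g(\cov_Y\xi,X)\bigr)=0$. Therefore $(i_\phi\curv)(X,Y,X,Y)=0$, and Proposition~\ref{symmetries} gives $i_\phi\curv=0$. I expect the only delicate point to be the bookkeeping in deriving the relation for $B$ from~\eqref{polarization_cov_phi}: one must differentiate the tensor identity correctly, keeping track of which slot $\cov$ adds and identifying $C$ precisely. Once that is in place, the skew-symmetry of $\cov\xi$ forces the diagonal contraction to vanish.
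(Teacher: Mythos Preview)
Your proof is correct and follows essentially the same route as the paper: reduce to the diagonal via Propositions~\ref{prop:iphicurv}--\ref{rphi}, rewrite $g((R_{X,Y}\phi)X,Y)$ through second covariant derivatives of $\phi$, differentiate the nearly Sasakian identity, and finish using the skew-symmetry of $\cov\xi$. The only cosmetic difference is organizational: the paper sets $Q(X,Y)=g((\cov^2_{Y,X}\phi)X,Y)$ and, using your relation with $b=c=X$, obtains the closed form $Q(X,Y)=\tfrac12\,d\eta(X,Y)\,g(X,Y)$, which is manifestly skew in $X,Y$; you instead evaluate your $C$ at two argument patterns and subtract. Both lead to the same cancellation.
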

\begin{proof}
By Proposition~\ref{prop:iphicurv} the tensor $i_\phi \curv$ has the
symmetries which permit to apply Proposition~\ref{symmetries}. Thus it is
enough to show that $(i_\phi \curv)(X,Y,X,Y)=0$ for all $X$, $Y\in
\Gamma(TM)$. By Proposition~\ref{rphi}, we have $i_\phi \curv = g\circ (R\phi \otimes \id) (1 +
(1,3)(2,4))$.
Thus $(i_\phi \curv)(X,Y,X,Y) = 2 g( (R_{X,Y}\phi) X,Y) $. 
By definition $R_{X,Y}\phi = \cov^2_{X,Y}\phi - \cov^2_{Y,X}\phi$. 
Since $\cov^2_{Y,X}\phi$ is a skew-symmetric operator, we get
\begin{equation*}
(i_\phi \curv)(X,Y,X,Y) = -2 ( g ( (\cov^2_{X,Y}\phi)Y,X) + g (
(\cov^2_{Y,X}\phi)X, Y)). 
\end{equation*}
From the above expression it follows that $(i_\phi \curv)(X,Y,X,Y)=0$ if and
only if the form 
$Q(X,Y) := g( (\cov^2_{Y,X}\phi)X,Y)$ satisfies $Q(X,Y) = - Q(Y,X)$. 
In the remaining part of the proof we will show  that $Q(X,Y) = (1/ 2)d\eta
(X,Y) g(X,Y)$. Then the result follows since $d\eta$ is skew-symmetric and
$g$ is symmetric. 

Applying $\cov$ to the defining condition for nearly pseudo-Sasakian structure 
\begin{equation*}
(\cov \phi - \xi\otimes g + \eta \otimes \id) ( 1 + (1,2))=0,
\end{equation*}
 we get 
\begin{equation}
\label{covtwophi}
(\cov^2 \phi - \cov\xi \otimes g+ \cov \eta \otimes \id)( 1 + (2,3)) =0. 
\end{equation}
Substituting $(Y,X,X)$ in~\eqref{covtwophi} and then applying $g(-,Y)$ to the
result, we get
\begin{equation*}
\label{covYXphi}
2 ( Q(X,Y) - g (\cov_Y \xi,Y) g(X,X) + (\cov_Y \eta)(X) g(X,Y) ) = 0.
\end{equation*}
By Proposition~\ref{easyFacts}, $(\cov_Y \eta)(X) = (1 / 2) d\eta (Y,X)$ and
$\cov \xi$ is 
skew-symmetric, which implies that $g(\cov_Y \xi , Y) = 0$. 
Hence 
$ Q(X,Y)= (1 / 2) d\eta (X,Y) g(X,Y)$ 
as promised. 
\end{proof}
\begin{proposition}
\label{curvReeb}
Let $(M,g,\phi,\xi,\eta)$ be a nearly pseudo-Sasakian manifold. Then $\curv\circ
\xi|_{\xi^{\perp}} =0$. 
\end{proposition}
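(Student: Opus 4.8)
The plan is to derive the statement entirely from Proposition~\ref{nps_verification} (the vanishing of $i_\phi\curv$), with no further appeal to the structure equations or to positivity of $g$. Write $K$ for the $(1,2)$-tensor $R\xi$, so that $K(X,Y)=R_{X,Y}\xi$. Since $\curv(X,Y,\xi,\xi)=0$ by the skew-symmetry of $\curv$ in its last two slots, $K(X,Y)$ is always orthogonal to $\xi$; and $\curv(X,Y,Z,\xi)=-\curv(X,Y,\xi,Z)=-g(K(X,Y),Z)$. Hence the proposition amounts to showing $K(X,Y)=0$ whenever $X,Y\in\xi^{\perp}$, and the orthogonality $K(X,Y)\perp\xi$ will be used crucially in the algebra below.

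First I would extract a pointwise identity for $K$ from $i_\phi\curv=0$. Evaluating the vanishing tensor on $(X,Y,Z,W)$ with $Z=\xi$ and using $\phi\xi=0$ gives $\curv(\phi X,Y,\xi,W)+\curv(X,\phi Y,\xi,W)+\curv(X,Y,\xi,\phi W)=0$. Rewriting each curvature term through $K$ and moving $\phi$ off the last slot by the skew-symmetry of $\phi$, this becomes $g\big(K(\phi X,Y)+K(X,\phi Y)-\phi K(X,Y),\,W\big)=0$ for all $W$, so by non-degeneracy of $g$
\begin{equation*}
K(\phi X,Y)+K(X,\phi Y)=\phi\,K(X,Y)
\end{equation*}
for all vector fields $X,Y$.

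The remaining step is purely algebraic and is where the work sits. On $\xi^{\perp}$ one has $\eta\circ\phi=0$ and $\phi^2=-\id$, so $\phi$ restricts to a complex structure preserving $\xi^{\perp}$, and $K$ restricts to a skew bilinear map $\xi^{\perp}\times\xi^{\perp}\to\xi^{\perp}$. Replacing $X$ by $\phi X$ and, separately, $Y$ by $\phi Y$ in the displayed relation and comparing the two resulting equations yields $K(\phi X,Y)=K(X,\phi Y)$; feeding this back gives $K(\phi X,Y)=\tfrac12\phi K(X,Y)$. Computing $K(\phi X,\phi Y)$ in two ways from these relations (and using $\phi^2K(X,Y)=-K(X,Y)$, valid since $K(X,Y)\perp\xi$) produces the value $\tfrac12 K(X,Y)$ one way and $-\tfrac14 K(X,Y)$ the other, forcing $\tfrac34 K(X,Y)=0$, i.e. $K(X,Y)=0$. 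Conceptually this is the statement that the identity above annihilates every $(p,q)$-type component of $K$ with respect to the complex structure $\phi$ on $\xi^{\perp}$.

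The only real subtlety I expect is recognizing that the full four-term curvature symmetry $i_\phi\curv=0$ collapses, upon inserting $\xi$ into the third slot, into the single intertwining relation for $K=R\xi$ displayed above, and that this relation alone already forces the vanishing of $K$ on $\xi^{\perp}$. The attendant bookkeeping --- consistently using $\curv(X,Y,Z,W)=g(R_{X,Y}Z,W)$, the skew-symmetry of $\curv$ in the last pair to trade $\xi$ in the last slot for $\xi$ in the third, and the skew-symmetry of $\phi$ --- is routine once the contraction with $\xi$ is chosen, and notably no positivity of the metric enters, consistent with the proposition being stated in the nearly pseudo-Sasakian setting.
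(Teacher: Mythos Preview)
Your argument is correct and uses exactly the same input as the paper: the identity $i_\phi\curv=0$ from Proposition~\ref{nps_verification}, together with $\phi\xi=0$ and $\phi^2|_{\xi^\perp}=-\id$, and nothing else (in particular no positivity of $g$).

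The organization differs. The paper inserts $\xi$ in the \emph{fourth} slot and writes out the four relations obtained by evaluating $i_\phi\curv=0$ at $(\phi X,Y,Z,\xi)$, $(X,\phi Y,Z,\xi)$, $(X,Y,\phi Z,\xi)$, $(\phi X,\phi Y,\phi Z,\xi)$ with $X,Y,Z\in\xi^\perp$; the linear combination (sum of the first three plus twice the fourth) collapses to $-3(\curv\circ\xi)(X,Y,Z)=0$ in one line. You instead insert $\xi$ in the third slot, distill everything into the single intertwining relation $K(\phi X,Y)+K(X,\phi Y)=\phi K(X,Y)$ for $K=R\xi$, and then iterate it. Your iterations are, after unwinding via $(\curv\circ\xi)(A,B,C)=-g(K(A,B),C)$, exactly the paper's four equations, and your final contradiction $\tfrac12 K=-\tfrac14 K$ is the same $3K=0$ the paper obtains. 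What your packaging buys is a clearer conceptual reading (no $(p,q)$-type component of $K$ with respect to the complex structure $\phi|_{\xi^\perp}$ can survive); what the paper's buys is brevity, since the linear combination is immediate once the four equations are displayed.
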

\begin{proof}
Let $X$, $Y$, $Z\in \xi^{\perp}$. 
We evaluate $i_\phi \curv=0$ on the quadruples $( \phi X, Y,
Z,\xi)$, $( X, \phi Y , Z,\xi)$, $( X , Y, \phi Z,\xi)$, and $( \phi X , \phi Y
, \phi Z,\xi)$. As $\phi^2|_{\xi^{\perp}} =-\id$ and, by
Proposition~\ref{easyFacts}, $\phi \xi=0$, this gives
the relations 
\begin{equation*}
\begin{aligned}
- (\curv\circ\xi)( X, Y, Z) & + & (\curv\circ\xi)( \phi X, \phi Y, Z) & + & (\curv\circ\xi)( \phi X, Y ,\phi Z) & =0\\ 
 (\curv\circ\xi)( \phi X, \phi Y, Z) & - & (\curv\circ\xi)( X ,  Y, Z) & + & (\curv\circ\xi)(
X,\phi Y ,\phi Z) & =0\\ 
 (\curv\circ\xi)(\phi X, Y, \phi Z) & + & (\curv\circ\xi)(  X, \phi Y, \phi Z) &- & (\curv\circ\xi)( X, Y ,Z) & =0\\ 
 - (\curv\circ\xi)( X,\phi Y, \phi Z) & - & (\curv\circ\xi)( \phi X,  Y, \phi Z) & - &
(\curv\circ\xi)(\phi  X, \phi Y ,Z) & =0 
\end{aligned}
\end{equation*}
Summing up the first three equations with the last one taken twice, we obtain that 
$-3 (\curv \circ \xi)( X,Y,Z)= 0$, and thus $\curv \circ \xi|_{\xi^{\perp}}
=0$. \end{proof}
\begin{proposition}
\label{RKilling}
Let $(M,g)$ be a pseudo-Riemannian manifold and $\xi$ a Killing vector  field
on $M$. Then $\cov^2 \xi$ can be determined from $R\xi$, namely 
\begin{equation*}
g\circ \cov^2 \xi =  g\circ R\xi \circ (1,2). 
\end{equation*}
\end{proposition}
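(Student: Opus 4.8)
The plan is to reduce the claimed tensor identity to its pointwise form and then run the classical reconstruction of the $2$-jet of a Killing field from the curvature (a Kostant-type argument). Unwinding the compositions and the permutation $(1,2)$, the asserted equality $g\circ\cov^2\xi = g\circ R\xi\circ(1,2)$ is equivalent to
\[
g(X,\cov^2_{Y,Z}\xi)=g(Y,R_{X,Z}\xi)\qquad\text{for all }X,Y,Z\in\Gamma(TM).
\]
It is convenient to name the left-hand side: set $P:=g\circ\cov^2\xi$, so that $P(X,Y,Z)=g(X,\cov^2_{Y,Z}\xi)$.

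First I would record two structural properties of $P$. Since $\xi$ is Killing, $\cov\xi$ is skew-symmetric, i.e. $g\circ\cov\xi$ is an alternating $2$-tensor. Applying $\cov$ to this relation and using $\cov g=0$ (exactly the device used in the proof of Proposition~\ref{easyFacts} to show that $\cov_\xi\phi$ is skew-symmetric) shows that for each $X$ the operator $Z\mapsto\cov^2_{X,Z}\xi$ is again skew-symmetric; equivalently $P$ is antisymmetric in its first and third arguments,
\[
P(X,Y,Z)=-P(Z,Y,X).
\]
On the other hand, the very definition $R_{Y,Z}=\cov^2_{Y,Z}-\cov^2_{Z,Y}$ gives the antisymmetrization of $P$ in its last two slots,
\[
g(X,R_{Y,Z}\xi)=P(X,Y,Z)-P(X,Z,Y).
\]

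With these two relations in hand, the reconstruction is purely formal. Alternately substituting the curvature relation (to exchange the second and third arguments of $P$) and the $(1,3)$-antisymmetry (to reverse the outer arguments), six such steps return $P(X,Y,Z)$ to $-P(X,Y,Z)$ together with three curvature terms, yielding
\[
2\,P(X,Y,Z)=g(X,R_{Y,Z}\xi)-g(Y,R_{Z,X}\xi)+g(Z,R_{X,Y}\xi).
\]
This is the exact analogue of the Koszul formula expressing the Levi-Civita connection through the metric.

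It remains to collapse the right-hand side. Rewriting each term through $\curv$ and using that $\curv$ is antisymmetric in its first pair and invariant under interchange of the two pairs, the combination becomes $\curv(Y,Z,\xi,X)+\curv(X,Z,\xi,Y)+\curv(X,Y,\xi,Z)$; the first and third of these add up, by the first Bianchi identity for $\curv$, to $\curv(X,Z,\xi,Y)$, so the whole right-hand side reduces to $2\,g(Y,R_{X,Z}\xi)$, which is exactly the desired identity. The only genuine work is the index bookkeeping in the six-step reconstruction and the recognition that the leftover cyclic combination is precisely a first Bianchi sum; both can equally well be phrased in the permutation calculus of Section~2, the Bianchi step being the single point where a true curvature symmetry, rather than formal manipulation, is used.
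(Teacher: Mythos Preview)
Your proof is correct. Both arguments start from the same fact: differentiating the skew-symmetry of $\cov\xi$ gives the $(1,3)$-antisymmetry of $P=g\circ\cov^2\xi$. They diverge at the second ingredient. The paper extracts a cyclic relation from $d^2\eta=0$ (with $\eta=g\circ\xi$), namely $P\circ(1+(1,2,3)+(1,3,2))=0$, and then combines this with the $(1,3)$-antisymmetry by a short computation in $\R\Sigma_3$. You instead run the classical Kostant six-step chase to obtain the Koszul-type formula $2P(X,Y,Z)=g(X,R_{Y,Z}\xi)-g(Y,R_{Z,X}\xi)+g(Z,R_{X,Y}\xi)$ and then collapse the right-hand side via the first Bianchi identity for $\curv$. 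The two routes are equivalent in content---$d^2=0$ on forms computed via the Levi--Civita connection and the first Bianchi identity are both manifestations of torsion-freeness---but the paper's version stays inside the permutation calculus and never names Bianchi, while yours is the textbook Kostant argument and makes the curvature symmetry explicit.
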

\begin{proof}
Since $\xi$ is Killing, the operator $\cov \xi$ is skew-symmetric, i.e.
$g \circ ( \cov \xi \otimes \id + \id \otimes \cov \xi) =0$. Applying $\cov$ to
this equation we get $g \circ ( \cov^2 \xi \otimes \id + \id \otimes \cov^2 \xi
\circ (1,2))) =0$. Since $g \circ (\cov^2 \xi \otimes \id) = g \circ \cov^2 \xi
\circ (1,2,3)$, we get 
\begin{equation*}
0 = g\circ \cov^2 \xi \circ ( (1,2,3) + (1,2))= g \circ \cov^2 \xi \circ
( (1,3) + 1)(1,2). 
\end{equation*}
Thus 
\begin{equation}\label{gcovtwoxisym}
g\circ \cov^2 \xi = - g\circ \cov^2 \xi \circ (1,3). 
\end{equation}
Next denote $g\circ \xi$ by $\eta$. 
Since $\xi$ is Killing, by repeating the computation in the last step of the
proof of Proposition~\ref{easyFacts}, we get $d\eta = -2 g \circ \cov \xi$. 
This implies
\begin{equation}
\label{ddeta}
\begin{aligned}
0 = d^2\eta &  = (\cov d\eta) (1 + (1,2,3)+ (1,3,2)) = 
-2 (g\circ \cov^2 \xi\circ (1,2))) (1 + (1,2,3) + (1,3,2)) 
\\& = -2 g \circ \cov^2 \xi \circ (1+ (1,2,3) + (1,3,2)) (1,2). 
\end{aligned}
\end{equation}
Now from~\eqref{gcovtwoxisym} and~\eqref{ddeta}, we get
\begin{equation*}
\begin{aligned}
g \circ R\xi & = g \circ \cov^2 \xi \circ (1 - (2,3)) = 
- g \circ \cov^2 \xi \circ ( (1,3) + (2,3)) \\ & = 
- g \circ \cov^2 \xi \circ (1 + (1,2,3) ) (1,3) 
 =  g \circ \cov^2 \xi \circ (1,3,2) (1,3) = g \circ \cov^2 \xi \circ (1,2). 
\end{aligned}
\end{equation*}
\end{proof}
In the next proposition we collect several partial results on the curvature
tensor of a nearly pseudo-Sasakian manifold. 
\begin{proposition}
\label{covtwoxi}
\label{curvexi}
Let $(M,g,\phi,\xi,\eta)$ be a nearly pseudo-Sasakian manifold. Then 
\begin{equation*}
\begin{aligned}
& R\xi = \eta \wedge (\cov \xi)^2,\quad 
\cov^2 \xi = - (\cov \xi)^2 \otimes \eta +  (g \circ (\cov \xi)^2) \otimes
\xi\\[1ex]
& R_\xi = (\cov \xi)^2 \otimes \eta - \xi\otimes g \circ (\cov \xi)^2 \\[1ex] 
& (R\phi) \xi = -\eta \wedge \phi (\cov \xi)^2,\quad
R_{\xi}\phi = -  (\cov \xi)^2 \phi \otimes \eta - (g\circ \phi(\cov \xi)^2)
\otimes \xi. 
\end{aligned}
\end{equation*}
\end{proposition}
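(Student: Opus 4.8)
The plan is to derive all six identities from two facts established earlier: the Killing-field identity of Proposition~\ref{RKilling}, which for our Reeb field reads $R_{\xi,X}Y = -\cov^2_{X,Y}\xi$ (obtained from $g\circ\cov^2\xi = g\circ R\xi\circ(1,2)$ together with the pair and antisymmetry symmetries of $\curv$), and Proposition~\ref{curvReeb}, which forces $R_{X,Y}\xi$ to vanish whenever $X,Y\perp\xi$ (since $g(R_{X,Y}\xi,Z)=-\curv(X,Y,Z,\xi)$ dies on $\xi^{\perp}$ and $g(R_{X,Y}\xi,\xi)=0$ by antisymmetry, nondegeneracy of $g$ then giving $R_{X,Y}\xi=0$). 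Everything else is bookkeeping with the structural identities of Proposition~\ref{easyFacts}, in particular $\cov_\xi\xi=0$, the skew-symmetry of $\cov\xi$, and the commutation of $\phi$ with $(\cov\xi)^2$.

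First I would compute the Jacobi term $R_{\xi,X}\xi$. Setting $Y=\xi$ in $R_{\xi,X}Y=-\cov^2_{X,Y}\xi$ and expanding $\cov^2_{X,\xi}\xi = \cov_X((\cov\xi)\xi)-(\cov\xi)(\cov_X\xi)$, the first summand dies because $(\cov\xi)\xi=\cov_\xi\xi=0$, leaving $\cov^2_{X,\xi}\xi=-(\cov\xi)^2X$, hence $R_{\xi,X}\xi=(\cov\xi)^2X$. Next I would upgrade this to the full tensor $R\xi$. Decomposing arbitrary $X,Y$ along $\xi$ and $\xi^{\perp}$ and using the antisymmetry of $R$ in its first two slots, Proposition~\ref{curvReeb} annihilates the $\xi^{\perp}\times\xi^{\perp}$ contribution, while $(\cov\xi)^2\xi=0$ lets me replace the $\xi^{\perp}$-components of the remaining terms by the full vectors; this yields $R_{X,Y}\xi=\eta(X)(\cov\xi)^2Y-\eta(Y)(\cov\xi)^2X$, that is $R\xi=\eta\wedge(\cov\xi)^2$.

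With $R\xi$ in hand I would feed it back into Proposition~\ref{RKilling}, written as $g(X,\cov^2_{Y,Z}\xi)=g(Y,R_{X,Z}\xi)$, and use that $(\cov\xi)^2$ is symmetric (being the square of a skew operator) to read off $\cov^2\xi=-(\cov\xi)^2\otimes\eta+(g\circ(\cov\xi)^2)\otimes\xi$; the relation $R_{\xi,X}Y=-\cov^2_{X,Y}\xi$ then gives $R_\xi=-\cov^2\xi$, which is the stated expression (the switch from $(g\circ(\cov\xi)^2)\otimes\xi$ to $\xi\otimes(g\circ(\cov\xi)^2)$ being harmless since $\xi$ carries no argument). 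Finally the two $\phi$-formulas follow formally: applying the curvature derivation $R_{X,Y}$ to $\phi\xi=0$ gives $(R_{X,Y}\phi)\xi=-\phi(R_{X,Y}\xi)$, so $(R\phi)\xi=-\phi\circ R\xi=-\eta\wedge\phi(\cov\xi)^2$; and writing $(R_{\xi,X}\phi)Y=R_{\xi,X}(\phi Y)-\phi(R_{\xi,X}Y)$, substituting the formula for $R_\xi$, and simplifying with $\phi\xi=0$, $\eta\circ\phi=0$ and $\phi(\cov\xi)^2=(\cov\xi)^2\phi$ produces $R_\xi\phi$.

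I expect the only delicate point to be the sign and argument-order bookkeeping when passing between $R\xi$, $R_\xi$ and $\cov^2\xi$ through the Killing identity and the pair symmetry of $\curv$, and when matching the tensor-product orderings in the last two formulas; all of the genuine geometric input is already packaged in Propositions~\ref{RKilling} and~\ref{curvReeb}.
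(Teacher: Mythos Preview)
Your proposal is correct and follows essentially the same route as the paper: both derive $R\xi$ from Proposition~\ref{curvReeb} together with the computation of $R_{\xi,X}\xi=(\cov\xi)^2X$, then feed this into Proposition~\ref{RKilling} to obtain $\cov^2\xi$ and $R_\xi$, and finally get the two $\phi$-formulas. One small but genuine economy in your version: for $R_\xi\phi$ you compute $(R_{\xi,X}\phi)Y=R_{\xi,X}(\phi Y)-\phi(R_{\xi,X}Y)$ directly from the already-established formula for $R_\xi$, whereas the paper appeals to the symmetry $g\circ(R\phi\otimes\id)(1+(1,3)(2,4))=0$ of Proposition~\ref{nps_verification}; your direct route avoids that extra input.
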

\begin{proof}
From Proposition~\ref{curvReeb}, we know that $\curv(X,Y,Z,\xi)=0$ for any
$X$, $Y$, $Z\in \xi^\perp$. As $\curv$ is skew-symmetric on the last two
arguments, we conclude that $g(R_{X,Y} \xi, Z)=\curv (X,Y,\xi,Z)=0 $. Thus
$R_{X,Y} \xi$ is proportional to $\xi$. Hence $R_{X,Y} \xi = \eta( R_{X,Y} \xi)
\xi = \curv (X,Y,\xi,\xi) \xi =0$ for $X$, $Y\in \xi^\perp$. 
This implies
\begin{equation}
\label{rxyxi}
R_{X,Y} \xi = \eta(X) R_{\xi, Y} \xi - \eta (Y) R_{\xi, X} \xi. 
\end{equation}
Thus it is enough to compute $R_{\xi, X} \xi$ or, equivalently,
$\curv(\xi,X,\xi,Y)$. 
Since $\curv(\xi,X,\xi,Y)$ is symmetric with respect to the swap of $X$ and
$Y$, it suffices to find formula for $\curv(\xi,X,\xi,X)$.
By~Proposition~\ref{easyFacts} the operator $\cov\xi$ is skew-symmetric, and
thus also $\cov^2_\xi \xi$ is skew-symmetric. This implies
\begin{equation*}
\begin{aligned}
\curv(\xi,X,\xi,X) & = g( \cov^2_{\xi,X}\xi,X) - g (\cov^2_{X,\xi}\xi,X) = 0  - g
(\cov_X (\cov_\xi \xi),X) + g ( \cov_{\cov_X \xi}\xi, X) \\ &  = g( (\cov\xi)^2 X,
X). 
\end{aligned}
\end{equation*}
Polarizing at $X$, we get $ \curv (\xi, X,\xi, Y) =  g ( (\cov \xi)^2 X, Y )$. 
Therefore $R_{\xi, X} \xi = (\cov \xi)^2 X$. Now~\eqref{rxyxi} can be written
in the form
\begin{equation*}
R \xi = \eta \wedge (\cov\xi)^2.
\end{equation*}
To compute $\cov^2 \xi$, we use the expression $g\circ \cov^2 \xi = (g \circ
R\xi)
\circ (1,2)$ obtained in Proposition~\ref{RKilling}. 
We get that for any $X$, $Y$, $Z\in \Gamma(TM)$
\begin{equation*}
\begin{aligned}
g(X, \cov^2_{Y,Z}\xi) &= g(Y,R_{X,Z}\xi) = \eta(X) g(Y,  (\cov \xi)^2 Z ) -
\eta(Z) g(Y,(\cov \xi)^2 X) 
\\[1ex] & = g(Y, (\cov \xi)^2 Z ) g(X,\xi)-
 g(X,(\cov \xi)^2 Y) \eta(Z).
\end{aligned}
\end{equation*}
The above formula is equivalent to the formula for $\cov^2 \xi$ in the
statement of the proposition since $g$ is non-degenerate. 

Now let $X$, $Y$, $Z$ be arbitrary vector fields on $M$. Then
\begin{equation*}
\begin{aligned}
g(R_{\xi,X}Y, Z) & = \curv(\xi, X,Y,Z) = \curv (Y,Z, \xi, X) 
\\ & = g (R_{Y,Z} \xi, X) = \eta(Y) g ( (\cov \xi)^2 Z, X) - \eta (Z) g
( (\cov \xi)^2 Y, X). 
\end{aligned}
\end{equation*}
Since $(\cov \xi)^2$ is self-adjoint and $g$ is non-degenerate, we get
\begin{equation*}
R_{\xi,X} Y = \eta(Y)(\cov \xi)^2 X - g(  X, (\cov \xi)^2Y) \xi
\end{equation*}
which is equivalent to the formula in the statement. 

To compute $(R\phi)\xi$ we use the already established formula for $R\xi$  
\begin{equation*}
\begin{aligned}
(R_{X,Y}\phi) \xi  & =  R_{X,Y}(\phi \xi) - \phi (R_{X,Y} \xi) = - (\eta \wedge
\phi (\cov \xi)^2)(X,Y). 
\end{aligned}
\end{equation*}
To find $R_\xi \phi$ we use the symmetry property of $g\circ (R\phi \otimes
\id)$ that was proved in Proposition~\ref{nps_verification}. 
We get 
\begin{equation*}
\begin{aligned}
g ( (R_{\xi,X}\phi)Y, Z) &= - g( (R_{Y,Z}\phi) \xi,X) = 
g ( \eta(Y)\phi (\cov \xi)^2 Z,X ) - g ( \eta(Z) \phi (\cov \xi)^2 Y,X)  
\\[1ex] & =  - g(  (\cov \xi)^2 \phi X, Z) \eta (Y) - g(\xi,Z) g(X, \phi (\cov \xi)^2 Y) .  
\end{aligned}
\end{equation*}
Since $g$ is non-degenerate it is equivalent to $R_\xi \phi = - (\cov \xi)^2
\phi \otimes \eta - \xi \otimes (g \circ \phi (\cov \xi)^2)$. 
\end{proof}
\begin{theorem}\label{Reebcharacteristicpolynomial}
Suppose $(M,g,\phi,\xi,\eta)$ is a nearly pseudo-Sasakian manifold. Then the characteristic
polynomial of $(\cov \xi)^2$ has constant coefficients.  
\end{theorem}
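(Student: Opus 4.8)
The plan is to show that every coefficient of the characteristic polynomial of the self-adjoint operator $A:=(\cov\xi)^2$ is a constant function on $M$, by proving that its derivative along an arbitrary vector field vanishes. Write $B:=\cov\xi$, which is skew-symmetric by Proposition~\ref{easyFacts}, so that $A=B^2$ is self-adjoint, $A\xi=0$, and $\eta\circ B=0$ (the latter because $B\xi=\cov_\xi\xi=0$). The coefficients of the characteristic polynomial of a $(2n+1)$-dimensional operator are, up to sign, its elementary symmetric functions $e_k$, and by Newton's identities these are polynomials in the power sums $p_k:=\trace(A^k)$, $k=1,\dots,2n+1$. Hence it suffices to prove $\cov_W p_k=0$ for every vector field $W$ and every $k$. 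Since the trace commutes with $\cov$, the Leibniz rule together with the cyclic invariance of the trace gives $\cov_W p_k=k\,\trace(A^{k-1}\,\cov_W A)$, so the whole problem reduces to understanding $\cov_W A$.

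Next I would compute $\cov_W A=(\cov_W B)B+B(\cov_W B)$ from the explicit expression for $\cov^2\xi$ furnished by Proposition~\ref{covtwoxi}. In the arguments $(W,X)$ that formula reads $\cov^2_{W,X}\xi=g(AW,X)\xi-\eta(X)\,AW$, so the operator $\cov_W B\colon X\mapsto\cov^2_{W,X}\xi$ equals $\xi\otimes g(AW,-)-(AW)\otimes\eta$. Using $B\xi=0$ and $\eta\circ B=0$, the two pieces simplify to $(\cov_W B)B=\xi\otimes g(AW,B\,-)$ and $B(\cov_W B)=-(B^3 W)\otimes\eta$. The key structural observation is that these are $g$-adjoints of one another: setting $S:=\xi\otimes g(AW,B\,-)$, one has $\cov_W A=S+S^{*}$ with $S^{*}=-(B^3 W)\otimes\eta$.

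Finally, since $A^{k-1}$ is self-adjoint and the trace is cyclic and invariant under taking adjoints, $\trace(A^{k-1}\,\cov_W A)=\trace(A^{k-1}S)+\trace(A^{k-1}S^{*})=2\,\trace(A^{k-1}S)=2\,g\bigl(AW,\,B\,A^{k-1}\xi\bigr)$. Because $A\xi=0$ we have $A^{k-1}\xi=0$ for $k\ge 2$, while for $k=1$ the inner factor is $B\xi=0$; in every case the expression vanishes. Therefore $\cov_W p_k=0$ for all $k$ and all $W$, each power sum is constant, and hence so is every coefficient of the characteristic polynomial of $(\cov\xi)^2$.

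The main obstacle is the middle step: one must extract the precise form of $\cov_W A$ from Proposition~\ref{covtwoxi} and recognize the self-adjoint splitting $\cov_W A=S+S^{*}$. Once this is in hand, the vanishing of the traces is forced entirely by the identities $A\xi=0$ and $\eta\circ B=0$, and the remainder is routine bookkeeping. I also note that no positivity of $g$ is used here, only its non-degeneracy (which guarantees $\trace M^{*}=\trace M$ and self-adjointness of $A$), consistent with the pseudo-Riemannian hypothesis of the statement.
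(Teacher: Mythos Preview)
Your proof is correct and follows the same overall strategy as the paper: reduce via Newton's identities to constancy of the power sums $\trace\bigl((\cov\xi)^{2k}\bigr)$, then use the explicit formula for $\cov^2\xi$ from Proposition~\ref{covtwoxi} together with $\cov_\xi\xi=0$ and $\eta\circ\cov\xi=0$ to show the derivative vanishes. The endgame differs slightly. The paper expands $\cov_Y\bigl((\cov\xi)^{2s}\bigr)$ term by term as $\sum_{k+\ell=2s-1}\trace\bigl((\cov\xi)^{k}(\cov^2_Y\xi)(\cov\xi)^{\ell}\bigr)$, observes that $(\cov\xi)\circ(\cov^2_Y\xi)\circ(\cov\xi)=0$ kills all interior terms, and then argues that each of the two surviving boundary operators squares to zero and is therefore traceless. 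You instead differentiate $A^{k}$ rather than $B^{2k}$, recognize $\cov_W A$ as a sum $S+S^{*}$ of two rank-one operators that are mutual $g$-adjoints, and compute the resulting trace directly as $g\bigl(AW,\,B A^{k-1}\xi\bigr)=0$. Your route is a bit cleaner---it avoids the nilpotency trick and makes transparent that only non-degeneracy of $g$ (needed for $\trace M^{*}=\trace M$) is used---while the paper's version has the virtue of never invoking adjoints at all.
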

\begin{proof}
Throughout  the proof we use that $\cov \xi$ and $\cov^2_Y \xi$ are
skew-symmetric operators. The first fact was proved in
Proposition~\ref{easyFacts}, and the second is its consequence.

The coefficients of the characteristic polynomial of $(\cov \xi)^2$ are
constant if and only if 
the traces of the operators $(\cov \xi)^{2s}$ for $0\le s\le 2n+1$ are
constant. 
In fact, if at some point $p$ of $M$ the spectrum (over $\C$) of $(\cov \xi)^2$
is $(\lambda_1,\dots, \lambda_{2n+1})$
then the $s$-th coefficient of the characteristic polynomial of $(\cov \xi)^2$
is up to the sign an  
elementary symmetric polynomial 
\begin{equation*}
\begin{aligned}
e_s  = \sum_{j_1 < \dots <j_s}
\lambda_{j_1} \cdot \lambda_{j_2}\dots \lambda_{j_s}
\end{aligned}
\end{equation*}
and the trace 
of $(\cov \xi)^{2s}$ is the power sum symmetric polynomial $p_s = \lambda_1^s +
\dots + \lambda_{2n+1}^s$.  
Now the claim follows from the Newton identities 
\begin{equation*}
\begin{aligned}
e_1 & = p_1, \quad 
se_s & = \sum_{j=1}^s(-1)^{j-1} e_{s - j}  p_j,\ s\ge 2. 
\end{aligned}
\end{equation*}

Next, we show that the traces $\trace((\cov \xi)^{2s})$ are constant functions
for all $s \ge 1$. 
Since $\cov$ commutes with contraction, we get that for any vector field
$Y$ on $M$
\begin{equation*}
Y ( \trace\, (\cov \xi)^{2s}) = \trace ( \cov_Y ( \cov \xi)^{2s})
= \sum_{k+\ell=2s-1} \trace \big(\,\, (\cov \xi)^{k} (\cov^2_Y \xi) (\cov
\xi)^\ell
\,\,\big).  
\end{equation*}
By Proposition~\ref{covtwoxi} we know that $\cov^2 \xi = - (\cov \xi)^2 \otimes
\eta + \xi \otimes (g\circ (\cov \xi)^2)$. Since $\cov_\xi \xi=0$ and
$\eta\circ \cov \xi=0$ by Proposition~\ref{easyFacts}, we get $(\cov \xi)\circ
(\cov^2_Y \xi) \circ \cov \xi =0$. 
Thus
\begin{equation}
\label{Ytrcovxitwos}
\begin{aligned}
Y ( \trace\, (\cov \xi)^{2s}) &  
=  \trace \big(\,\,  (\cov^2_Y \xi) (\cov \xi)^{2s-1} \,\,\big)
  + 
\trace \big(\,\,   (\cov \xi)^{2s-1}(\cov^2_Y \xi) \,\,\big).
\end{aligned}
\end{equation}
Since 
the trace of a nilpotent operator is always zero and
\begin{equation*}
\begin{aligned}
\big(\,(\cov^2_Y \xi) (\cov \xi)^{2s-1} \,\big)^2 &= 
(\cov^2_Y \xi) (\cov \xi)^{2s-1} (\cov^2_Y \xi) (\cov \xi)^{2s-1} =0\\
\big(\, (\cov \xi)^{2s-1}(\cov^2_Y \xi)\,\big)^2 &= 
 (\cov \xi)^{2s-1}(\cov^2_Y \xi)
 (\cov \xi)^{2s-1}(\cov^2_Y \xi) = 0,
\end{aligned}
\end{equation*}
we conclude that the both traces in~\eqref{Ytrcovxitwos} are zero and therefore
$\trace(\cov \xi)^{2s} $ is a constant function for all $s$. 
\end{proof} 
In the case of nearly Sasakian manifolds
Theorem~\ref{Reebcharacteristicpolynomial} implies the existence of a tangent
bundle decomposition into a direct sum of subbundles. This decomposition will
be crucial in our proof of Theorem~\ref{mainish} which gives an explicit
formula for $\cov \phi$ on a nearly Sasakian manifold. 
Recall that by Theorem~\ref{nearly_implies_contact}   the spectrum of
$(\cov\xi)^2$ on a nearly Sasakian manifold is non-positive. 

\begin{proposition}
\label{TMdecomposition}
Let $(M,g,\phi,\xi,\eta)$ be a nearly Sasakian manifold.
Suppose $0 =\lambda_0> -\lambda_1 > \dots > -\lambda_\ell$ are the roots of the
characteristic polynomial of 
$(\cov\xi)^2$. 
 Then  $TM$ can be written
as a direct sum of pair-wise orthogonal subbundles $V_k \subset TM$ such that,
for every $0\le k \le \ell$, the restriction of
$(\cov \xi)^2$ to  $V_k$ equals $-\lambda_k \cdot \id$.    
\end{proposition}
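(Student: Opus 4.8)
The plan is to exhibit each $V_k$ as the image of a smooth spectral projection of $(\cov\xi)^2$ onto its $(-\lambda_k)$-eigenspace, and to deduce the subbundle and orthogonality properties from the self-adjointness of $(\cov\xi)^2$ together with the constancy supplied by Theorem~\ref{Reebcharacteristicpolynomial}. First I would record that, since $\cov\xi$ is skew-symmetric by Proposition~\ref{easyFacts}, the operator $(\cov\xi)^2$ is self-adjoint: $g((\cov\xi)^2 X, Y) = -g(\cov\xi X, \cov\xi Y) = g(X, (\cov\xi)^2 Y)$. Because the metric is positive definite (this is precisely where I would use that $M$ is nearly Sasakian rather than nearly pseudo-Sasakian), at each point $(\cov\xi)^2$ is diagonalizable over $\R$, so its minimal polynomial is a product of distinct linear factors. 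By Theorem~\ref{Reebcharacteristicpolynomial} the characteristic polynomial is globally constant; hence the distinct eigenvalues are exactly the constants $-\lambda_0, \dots, -\lambda_\ell$, each occurring with a constant algebraic (equal to geometric) multiplicity $m_k$, and the minimal polynomial equals $\prod_{k=0}^{\ell}(t + \lambda_k)$ at every point.

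Next I would introduce the Lagrange interpolation projections
\begin{equation*}
P_k := \prod_{j\neq k} \frac{(\cov\xi)^2 + \lambda_j \id}{\lambda_j - \lambda_k}, \qquad 0 \le k \le \ell.
\end{equation*}
Each $P_k$ is a polynomial in the smooth tensor $(\cov\xi)^2$ with constant coefficients, hence a smooth $(1,1)$-tensor. Using the polynomial identity $\sum_k \prod_{j\neq k}\frac{t+\lambda_j}{\lambda_j - \lambda_k} = 1$ (which holds because both sides are polynomials of degree at most $\ell$ agreeing at the $\ell+1$ nodes $t=-\lambda_k$) together with the vanishing of $\prod_{k}(t + \lambda_k)$ upon substituting $(\cov\xi)^2$ for $t$, I would verify the standard relations $\sum_{k=0}^{\ell} P_k = \id$, $P_k P_j = 0$ for $k \neq j$, and therefore $P_k^2 = P_k$; moreover $((\cov\xi)^2 + \lambda_k \id) P_k = 0$. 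These identities show that $V_k := \im P_k = \ker((\cov\xi)^2 + \lambda_k \id)$ and that $(\cov\xi)^2$ restricts to $-\lambda_k \id$ on $V_k$. Since $P_k$ is a smooth idempotent of constant rank $m_k$, its image $V_k$ is a smooth subbundle of $TM$, and $\sum_k P_k = \id$ with $P_k P_j = 0$ yields the direct sum decomposition $TM = \bigoplus_{k=0}^{\ell} V_k$.

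Finally, orthogonality of distinct summands is forced by self-adjointness: for $u \in V_k$ and $v \in V_j$ with $k \neq j$ one has $-\lambda_k\, g(u,v) = g((\cov\xi)^2 u, v) = g(u, (\cov\xi)^2 v) = -\lambda_j\, g(u,v)$, so $g(u,v) = 0$. The only delicate point is the smoothness and constant rank of the eigenbundles; I expect this to be the main obstacle, and it is resolved precisely by the Lagrange projections above, whose coefficients are constant thanks to Theorem~\ref{Reebcharacteristicpolynomial}. Everything else is the pointwise real spectral theorem, packaged so as to vary smoothly over $M$.
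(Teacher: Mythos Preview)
Your argument is correct and follows essentially the same approach as the paper: both use that $(\cov\xi)^2$ is self-adjoint (hence diagonalizable, since $g$ is positive definite) and take $V_k=\ker((\cov\xi)^2+\lambda_k\id)$, with orthogonality coming from self-adjointness. You are more explicit than the paper about the smoothness and constant-rank issues, handling them via the Lagrange projection polynomials and the constancy from Theorem~\ref{Reebcharacteristicpolynomial}, whereas the paper leaves these as implicit/standard facts.
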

\begin{proof}
By Proposition~\ref{easyFacts} the operator $\cov \xi$ is skew-symmetric, and
therefore $(\cov\xi)^2$ is symmetric. As $g$ is positively defined this implies
that $(\cov \xi)^2$ is diagonalizable. Denote by $a_k$ the multiplicity of
$-\lambda_k$ in the characteristic polynomial of $(\cov\xi)^2$. Then, by
examining the diagonal form of $(\cov\xi)^2$, one can see that
$\rank( (\cov\xi)^2 +\lambda_k \cdot \id ) = 2n+1 - a_k$ and that $TM$ can be
written as a direct sum of the subbundles $V_k = \ker( (\cov \xi)^2 + \lambda_k \cdot
\id)$.
It is a standard fact that these subbundles are mutually orthogonal and clearly
the restriction of $(\cov \xi)^2$ to $V_k$ equals $-\lambda_k \cdot \id$. 
\end{proof}

\section{Covariant derivative of $\phi$}
In this section we derive a rather explicit formula for $\cov_X \phi$ on a
nearly pseudo-Sasakian manifold. 
We achieve this by computing separately $\cov_X \phi$ on subspaces $\left\langle
\xi \right\rangle$, $\im (\cov_\xi \phi)$, and $\im (\cov_\xi \phi)^\perp\cap
\xi^\perp$. Then, we will use the  formula to prove Theorem~\ref{main}.

\begin{proposition}
\label{covxiphi}
Let $(M,g,\phi,\xi,\eta)$ be a nearly pseudo-Sasakian manifold. Then 
\begin{equation*}
\cov^2_\xi \phi = \eta \wedge (\cov_\xi \phi \circ \cov \xi) - \xi \otimes \Big(g
\circ  (\cov_\xi \phi \circ \cov \xi)\Big).
\end{equation*}
\end{proposition}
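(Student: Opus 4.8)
The plan is to regard $\cov^2_\xi\phi$ as the $(1,2)$-tensor $\cov_\xi(\cov\phi)$, so that $(\cov^2_\xi\phi)(X,Y)=(\cov^2_{\xi,X}\phi)Y$, and to reconstruct it from its components along the orthogonal splitting $TM=\langle\xi\rangle\oplus\xi^{\perp}$. Writing $A:=\cov_\xi\phi\circ\cov\xi$ for brevity, the asserted identity reads $(\cov^2_\xi\phi)(X,Y)=\eta(X)AY-\eta(Y)AX-g(X,AY)\xi$, so I must (i) pin down the diagonal value $\cov^2_{\xi,\xi}\phi$, and (ii) show that the purely horizontal part $g((\cov^2_{\xi,X}\phi)Y,Z)$ vanishes for $X,Y,Z\in\xi^{\perp}$. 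Throughout I will use freely from Proposition~\ref{easyFacts} that $\cov_\xi\xi=0$, $\cov_\xi\eta=0$, that $\cov\xi$ is skew and $\cov_\xi\phi$ anticommutes with $\phi$, that $\cov_\xi\phi=\phi(\phi+\cov\xi)$, and that each $\cov^2_{X,Y}\phi$ is a skew-symmetric operator.

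Two ingredients are immediate. First, differentiating the polarized condition \eqref{cov_phi_nearly_sasakian_} along $\xi$ — equivalently substituting $X_1=\xi$ in \eqref{covtwophi} and using $\cov_\xi\xi=\cov_\xi\eta=0$ — makes the right-hand side disappear and shows that $\cov^2_\xi\phi$ is skew-symmetric in its two arguments, $(\cov^2_{\xi,X}\phi)Y=-(\cov^2_{\xi,Y}\phi)X$. Second, since $\cov_\xi\xi=0$ one has $\cov^2_{\xi,\xi}\phi=\cov_\xi(\cov_\xi\phi)$; writing $\cov_\xi\phi=\phi^2+\phi\circ\cov\xi$ and differentiating along $\xi$, the term $\cov_\xi(\phi^2)=(\cov_\xi\phi)\phi+\phi(\cov_\xi\phi)$ vanishes by anticommutativity, while $\cov_\xi(\phi\circ\cov\xi)$ reduces to $A$ because Proposition~\ref{covtwoxi} gives $\cov^2_{\xi,Y}\xi=0$ (indeed $(\cov\xi)^2\xi=0$). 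Hence $\cov^2_{\xi,\xi}\phi=A$, and the skew-symmetry then yields $(\cov^2_{\xi,X}\phi)\xi=-AX$.

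The main obstacle is step (ii), which I handle by shifting the distinguished direction $\xi$ through the slots of $\cov^2\phi$. The curvature relation $\cov^2_{\xi,X}\phi-\cov^2_{X,\xi}\phi=R_{\xi,X}\phi$ together with the formula for $R_\xi\phi$ from Proposition~\ref{covtwoxi} shows that $g((\cov^2_{\xi,X}\phi)Y,Z)=g((\cov^2_{X,\xi}\phi)Y,Z)$ for $Y,Z\in\xi^{\perp}$, since the curvature contribution is a multiple of $\xi$. Next, evaluating \eqref{covtwophi} at $(X,\xi,Y)$ and pairing with $Z\in\xi^{\perp}$ annihilates the right-hand side (the surviving terms carry a factor $g(\xi,Y)$ or a multiple of $\xi$), giving $g((\cov^2_{X,\xi}\phi)Y,Z)=-g((\cov^2_{X,Y}\phi)\xi,Z)$. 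Finally I compute $(\cov^2_{X,Y}\phi)\xi$ by differentiating the identity $(\cov_A\phi)\xi=-\phi(\cov_A\xi)$ from Proposition~\ref{easyFacts}: the connection terms cancel and one is left with $(\cov^2_{X,Y}\phi)\xi=-(\cov_X\phi)(\cov_Y\xi)-(\cov_Y\phi)(\cov_X\xi)-\phi\,\cov^2_{X,Y}\xi$. For $Y\in\xi^{\perp}$ the last summand is a multiple of $\phi\xi=0$, so that for $X,Y,Z\in\xi^{\perp}$ one obtains $g((\cov^2_{\xi,X}\phi)Y,Z)=g((\cov_X\phi)(\cov_Y\xi)+(\cov_Y\phi)(\cov_X\xi),Z)$. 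The key observation is that this expression is \emph{symmetric} in $X$ and $Y$, whereas by step (i) the left-hand side is \emph{skew}; hence it must vanish. This is the crucial point, and it is exactly here that the missing explicit knowledge of $\cov\phi$ is traded for a symmetry argument.

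It remains to assemble the pieces on $TM=\langle\xi\rangle\oplus\xi^{\perp}$. Decomposing $X$ and $Y$ into their $\xi$- and $\xi^{\perp}$-components and using $A\xi=0$ (because $\cov_\xi\xi=0$), the diagonal value $\cov^2_{\xi,\xi}\phi=A$, the identity $(\cov^2_{\xi,X}\phi)\xi=-AX$, the skew-symmetry of the operator $\cov^2_{\xi,X}\phi$ (to extract the $\xi$-component), and the vanishing from step (ii) together reconstruct $(\cov^2_{\xi,X}\phi)Y=\eta(X)AY-\eta(Y)AX-g(X,AY)\xi$. The last term appears in this form because $A=\cov_\xi\phi\circ\cov\xi$ is skew-symmetric, which follows from $\cov\xi=-\phi-\phi\circ\cov_\xi\phi$ together with the facts, from Proposition~\ref{easyFacts}, that both $\cov_\xi\phi$ and $\phi\circ\cov_\xi\phi$ are skew while $(\cov_\xi\phi)^2$ commutes with $\phi$. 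This is precisely the claimed formula $\cov^2_\xi\phi=\eta\wedge(\cov_\xi\phi\circ\cov\xi)-\xi\otimes\big(g\circ(\cov_\xi\phi\circ\cov\xi)\big)$.
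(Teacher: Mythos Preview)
Your proof is correct, but it follows a genuinely different route from the paper's. The paper proceeds purely algebraically: it invokes the identity
\[
2\cdot \mathrm{id} = (1-(1,2))(1+(1,2,3)-(1,3,2)) + (1+(2,3))(1-(1,2,3)+(1,3,2))
\]
in $\mathbb{R}\Sigma_3$ to write $2\nabla^2\phi$ as a combination of $R\phi$ and the tensor $T=(\nabla\xi\otimes g-\nabla\eta\otimes\id)(1+(2,3))$, then evaluates the resulting six terms at $(\xi,X,Y)$ using the explicit formulas for $R_\xi\phi$ and $(R\phi)\xi$ from Proposition~\ref{covtwoxi}, and finally simplifies $(\nabla\xi)(\id-(\nabla\xi)\phi)$ to $-\nabla_\xi\phi\circ\nabla\xi$. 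Your argument instead decomposes along $TM=\langle\xi\rangle\oplus\xi^{\perp}$: you compute the diagonal value $\nabla^2_{\xi,\xi}\phi=A$ by differentiating $\nabla_\xi\phi=\phi^2+\phi\circ\nabla\xi$, and you kill the purely horizontal component by the neat observation that it is simultaneously skew (from the differentiated nearly Sasakian relation) and symmetric (after shifting $\xi$ through the slots via the curvature identity and~\eqref{covtwophi}). Both approaches rely on Proposition~\ref{covtwoxi} for $\nabla^2\xi$ and $R_\xi\phi$, but yours trades the $\Sigma_3$-machinery for a symmetry argument; the paper's method is more mechanical and reusable (the same permutation trick drives several computations in the text), whereas your method is shorter here and makes the geometric structure of $\nabla^2_\xi\phi$ along the splitting more transparent.
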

\begin{proof}
Applying $\cov$ to the defining relation of nearly pseudo-Sasakian structure
$(\cov \phi -  \xi\otimes g + \eta\otimes \id) ( 1 + (1,2)) =0
$ we get
\begin{equation}\label{covtwophii}
(\cov^2 \phi - \cov \xi \otimes g + \cov \eta \otimes \id) ( 1 + (2,3)) =0. 
\end{equation}
Denote $( \cov \xi \otimes g - \cov \eta \otimes \id) (1 + (2,3))$ by $T$.
Then~\eqref{covtwophii} becomes $(\cov^2 \phi) (1+(2,3)) = T$. 
By definition of $R$ we have $(\cov^2 \phi) (1 -(1,2)) = R\phi$. 
We have the following equality in $\R\Sigma_3$
\begin{equation}\label{expressionoftwo}
2\cdot \mathrm{id} = (1 - (1,2)) (1 + (1,2,3) - (1,3,2)) + (1 + (2,3)) (1 - (1,2,3) + (1,3,2)). 
\end{equation}
Therefore
\begin{equation}\label{covtwophiii}
2 \cov^2 \phi =  R\phi(1 + (1,2,3) - (1,3,2)) + T (1 - (1,2,3) + (1,3,2)). 
\end{equation}
Now we substitute $(\xi,X,Y)$ in~\eqref{covtwophiii}
\begin{equation}\label{covtwophixi}
2 (\cov^2_{\xi,X}\phi)Y = (R_{\xi,X}\phi)Y + (R_{Y,\xi}\phi) X - (R_{X,Y}\phi)
\xi + T(\xi,X,Y) - T(Y,\xi,X) + T(X,Y,\xi). 
\end{equation}
By Proposition~\ref{covtwoxi}, we have
$R_{\xi}\phi = -  (\cov \xi)^2 \phi \otimes \eta - (g\circ \phi(\cov \xi)^2)
\otimes \xi$ and $(R\phi) \xi = -\eta \wedge \phi (\cov \xi)^2$. Therefore the $R$-part of~\eqref{covtwophixi}
evaluates to
\begin{equation*}
\begin{aligned}
- g (X, \phi (\cov \xi)^2 Y) \xi & - \eta(Y) (\cov \xi)^2\phi  X 
\\ & + g (Y, \phi (\cov \xi)^2 X) \xi  + \eta (X) (\cov \xi)^2\phi  Y 
\\ & \phantom{+ g (Y, (\cov \xi)^2 \phi X) \xi}\,\,\, +  \eta (X) \phi (\cov \xi)^2 Y - \eta (Y) \phi (\cov \xi)^2 X 
\\[1ex] &= 2 \Big( - g (X, (\cov \xi)^2 \phi Y) \xi - \eta(Y) \phi (\cov \xi)^2 X + \eta
(X) \phi (\cov \xi)^2 Y\Big),
\end{aligned}
\end{equation*}
where we use that $\phi$ and $(\cov \xi)^2$ commute by
Proposition~\ref{easyFacts}. 
Next,
\begin{equation*}
\begin{aligned}
T(\xi,X,Y) &= 0  \\[1ex]
T(Y, \xi, X) & = 2 (\cov \xi)(Y) \eta (X) - (\cov_Y \eta) (\xi) X - (\cov_Y \eta)
(X) \xi \\ &  = - g (X, (\cov \xi) Y )\xi  + 2\eta(X) (\cov \xi)Y \\[1ex] 
T (X,Y,\xi) &= T (X, \xi, Y) = - g(Y, (\cov \xi)X )\xi + 2 \eta(Y) (\cov \xi)X. 
\end{aligned}
\end{equation*}
Thus the $T$-part of the right side of~\eqref{covtwophixi}
is
\begin{equation*}
2 g(X, (\cov \xi)Y ) \xi + 2 \eta(Y) (\cov \xi)X - 2 \eta (X) (\cov \xi)Y.
\end{equation*}
As a result we get
\begin{equation}
\label{covtwoxiphi}
\cov^2_\xi \phi =  \xi \otimes g \circ (\cov \xi) (\id - (\cov \xi) \phi) -
\eta \wedge (\cov \xi) (\id -(\cov \xi) \phi). 
\end{equation}
By Proposition~\ref{easyFacts} the operator $(\cov \xi)^2$ commutes with 
$\phi$, $\eta \circ \cov \xi $ vanishes, and $\phi (\phi + \cov \xi) = \cov_\xi \phi$. 
Therefore
\begin{equation}
\label{covxiid}
\begin{aligned}
(\cov \xi) (\id - (\cov \xi) \phi )
  & = (\id - \phi \cov \xi) \cov \xi 
    =  ( - \phi^2 + \xi \otimes \eta - \phi \cov \xi ) \cov \xi 
\\[1ex] & =  - \phi (\phi + \cov \xi) \cov \xi = - \cov_\xi \phi \circ \cov \xi.
\end{aligned}
\end{equation}
Substituting~\eqref{covxiid} in~\eqref{covtwoxiphi}, we get the claim of the
proposition. 
\end{proof}
Given two tensor fields  $T_1$ and $T_2$ on a manifold $M$ such that both
products $T_1 \circ T_2$ and $T_2\circ T_1$ make sense, we define 
\emph{commutator} and \emph{anticommutator} of $T_1$ and $T_2$ by $\Big[\,T_1,T_2\,\Big]= T_1
\circ T_2 - T_2 \circ T_1 $ and $\big\{\,T_1,T_2\,\big\} = T_1 \circ T_2 + T_2
\circ T_1$, respectively. 
The aim of the next three propositions is to find $(\cov_X \phi)Y$ on a nearly
pseudo-Sasakian manifold in the case $Y$ is in the image of $\cov_\xi \phi$.
For this we compute $(\cov \phi)(\cov_\xi \phi)$. The later tensor can be
written as a half-sum of $\big\{\,\cov \phi,\cov_\xi \phi\,\big\}$ and $\Big[\,\cov \phi,\cov_\xi \phi\,\Big]$. 
 \begin{proposition}
\label{prop:anticommutator}
Let $(M,g,\phi,\xi,\eta)$ be a nearly pseudo-Sasakian manifold. 
Then 
\begin{equation*}
\begin{aligned}
\big\{ \cov \phi, \cov_\xi \phi \big\} &  = 
2 \eta \otimes (\cov_\xi \phi)^2
  - (\cov_\xi \phi)(\id + \cov_\xi \phi) \otimes \eta
  + \xi \otimes \Big( g\circ (\cov_\xi \phi) (\id - \cov_\xi
\phi)\Big). 
\end{aligned}
\end{equation*}

\end{proposition}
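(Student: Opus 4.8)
The plan is to differentiate the anticommutation relation between $\phi$ and $\cov_\xi\phi$, thereby reducing the statement to an anticommutator with $\phi$ alone, and then to insert the explicit value of the mixed second derivative $\cov^2_{X,\xi}\phi$.

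By Proposition~\ref{easyFacts} the operators $\phi$ and $\cov_\xi\phi$ anticommute, i.e.\ $\{\phi,\cov_\xi\phi\}=0$. Applying $\cov_X$ to this identity and using the Leibniz rule for $\circ$ gives $\{\cov_X\phi,\cov_\xi\phi\}+\{\phi,\cov_X(\cov_\xi\phi)\}=0$, that is
\[
\{\cov\phi,\cov_\xi\phi\}(X,\,\cdot\,) = -\,\{\phi,\ \cov_X(\cov_\xi\phi)\}.
\]
This trades the unknown $\cov_X\phi$ for $\cov_X(\cov_\xi\phi)=\cov^2_{X,\xi}\phi+\cov_{\cov_X\xi}\phi$, the last summand being the ordinary first derivative of $\phi$ along $\cov_X\xi=(\cov\xi)X$.

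The core of the argument is to compute $\cov^2_{X,\xi}\phi$. I would substitute $(X,\xi,Y)$ into~\eqref{covtwophiii} and evaluate the six resulting summands using the formulas for $R_\xi\phi$ and $(R\phi)\xi$ from Proposition~\ref{covtwoxi} and the explicit tensor $T$ appearing there. Using that $\phi$ commutes with $(\cov\xi)^2$, that $\phi\xi=0$ and $\eta\circ\phi=0$, and the skew-symmetry of $\phi$ and $\cov\xi$ (all from Proposition~\ref{easyFacts}), most contributions cancel and one is left with the compact expression
\[
(\cov^2_{X,\xi}\phi)Y = \eta(X)\,\phi(\cov\xi)^2 Y + \eta(Y)(\cov\xi)X - \eta(X)(\cov\xi)Y - g\big(Y,(\cov\xi)X\big)\xi.
\]
I expect this extraction to be the main obstacle: the permutation bookkeeping in~\eqref{covtwophiii} together with the delicate cancellations requires combining the symmetries of $\curv$ with the facts of Proposition~\ref{easyFacts} in precisely the right way.

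It then remains to anticommute with $\phi$. For $\{\phi,\cov_{\cov_X\xi}\phi\}$ I would use the identity $\{\phi,\cov_W\phi\}=(\cov_W\xi)\otimes\eta+\xi\otimes(\cov_W\eta)$, obtained by differentiating $\phi^2=-\id+\xi\otimes\eta$, specialised to $W=(\cov\xi)X$ (so that $\cov_W\xi=(\cov\xi)^2X$). For $\{\phi,\cov^2_{X,\xi}\phi\}$ I would substitute the displayed expression and simplify with $\phi^2=-\id+\xi\otimes\eta$, $\phi\xi=0$, and $\{\phi,\cov\xi\}=2(\id-\xi\otimes\eta)$ (a consequence of the relations in Proposition~\ref{easyFacts}). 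Finally, rewriting the outcome through $\phi(\cov\xi)X=(\cov_\xi\phi)X+X-\eta(X)\xi$ and $(\cov_\xi\phi)^2=(\cov\xi)^2+\id-\xi\otimes\eta$ matches exactly the three terms $2\eta\otimes(\cov_\xi\phi)^2$, $-(\cov_\xi\phi)(\id+\cov_\xi\phi)\otimes\eta$, and $\xi\otimes\big(g\circ(\cov_\xi\phi)(\id-\cov_\xi\phi)\big)$ in the statement.
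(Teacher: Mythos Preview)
Your approach is correct and close in spirit to the paper's, but the order of differentiation is swapped, which introduces some avoidable extra work. The paper applies $\cov^2_{\xi,-}$ to the almost contact identity $\phi^2+\id-\xi\otimes\eta=0$: this produces directly
\[
\{\cov\phi,\cov_\xi\phi\}=-\{\phi,\cov^2_{\xi,-}\phi\},
\]
the terms coming from $\xi\otimes\eta$ vanishing because $\cov^2_{\xi,Y}\xi=0$ (by Proposition~\ref{covtwoxi}). Crucially, the operator $\cov^2_{\xi,-}\phi$ on the right has already been computed in Proposition~\ref{covxiphi}, so only the final anticommutation with $\phi$ remains. You instead apply $\cov_X$ to the consequence $\{\phi,\cov_\xi\phi\}=0$; this forces you to work with $\cov_X(\cov_\xi\phi)=\cov^2_{X,\xi}\phi+\cov_{(\cov\xi)X}\phi$, so that (i) the second covariant derivative appears with arguments in the order $(X,\xi)$ rather than $(\xi,X)$, and (ii) an extra connection term $\cov_{(\cov\xi)X}\phi$ shows up. Your displayed formula for $(\cov^2_{X,\xi}\phi)Y$ is correct (it differs from Proposition~\ref{covxiphi} precisely by $R_{\xi,X}\phi$), and the subsequent simplifications do collapse to the stated result; but you are effectively redoing the calculation of Proposition~\ref{covxiphi} and then undoing the curvature correction via the extra term. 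The paper's ordering avoids both detours.
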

\begin{proof}
Recall that by Proposition~\ref{easyFacts} we have $\cov_\xi \xi=0$ and
$\cov_\xi \eta =0$. 
Applying $\cov^2_\xi$ to the almost contact structure condition  $\phi^2 + \id - \xi \otimes \eta=0$ we get
\begin{equation*}
(\cov^2_\xi \phi)\circ \phi + (\cov_\xi \phi) \circ (\cov \phi ) + (\cov \phi)
\circ (\cov_\xi \phi) + \phi (\cov^2_\xi \phi) - (\cov^2_\xi \xi) \otimes \eta
- \xi \otimes g \circ (\cov^2_\xi \xi) = 0.
\end{equation*}
Applying the formula $\cov^2 \xi= - (\cov \xi)^2 \otimes \eta +  (g \circ (\cov \xi)^2) \otimes
\xi$ obtained in Proposition~\ref{covtwoxi},
we get
\begin{equation*}
(\cov^2_{\xi,Y} \xi) =  - (\cov \xi)^2 \xi \cdot \eta(Y) + g(Y, (\cov \xi)^2 \xi)\xi
= 0.  
\end{equation*}
Therefore
\begin{equation}
\label{covxicovphi}
(\cov_\xi \phi) \circ (\cov \phi ) + (\cov \phi) \circ (\cov_\xi \phi)  = 
- (\cov^2_\xi \phi) \circ \phi - \phi \circ (\cov^2_\xi \phi). 
\end{equation}
We showed in Proposition~\ref{covxiphi} that 
\begin{equation*}
\cov^2_\xi \phi = \eta \wedge (\cov_\xi \phi \circ \cov \xi) - \xi \otimes \Big(g
\circ  (\cov_\xi \phi \circ \cov \xi)\Big).
\end{equation*}
Since $\phi \xi =0$ and $\eta\circ \phi =0$, we
conclude
\begin{equation}
\label{covtwoxiphiphi}
\begin{aligned}
\cov^2_\xi \phi \circ \phi & = \eta \otimes (\cov_\xi \phi \circ \cov \xi \circ \phi) - \xi \otimes \Big(g
\circ  (\cov_\xi \phi \circ \cov \xi \circ \phi)\Big) 
\\[1ex] 
\phi \circ \cov^2_\xi \phi & = \eta \wedge (\phi \circ \cov_\xi \phi \circ \cov \xi) .
\end{aligned}
\end{equation}
Next, we use that by Proposition~\ref{easyFacts} the operators $\phi$ and
$(\cov_\xi \phi)$ anticommute, and $\cov \xi = -\phi(\id +
\cov_\xi \phi)$ to get
\begin{equation}
\label{covxiphicovxiphi}
\begin{aligned}
\cov_\xi \phi \circ \cov \xi \circ \phi & = 
- \cov_\xi \phi \circ \phi (\id + \cov_\xi \phi) \circ \phi 
= -\cov_\xi \phi \circ \phi^2 (\id - \cov_\xi \phi) = 
 \cov_\xi \phi ( \id - \cov_\xi \phi)\\
\phi \circ \cov_\xi \phi\circ \cov \xi & = 
- \phi \circ \cov_\xi \phi \circ \phi(\id + \cov_\xi \phi)
= \cov_\xi \phi \circ \phi^2 ( \id + \cov_\xi \phi) 
= - \cov_\xi \phi ( \id + \cov_\xi \phi). 
\end{aligned}
\end{equation}
Combining~\eqref{covxicovphi},~\eqref{covtwoxiphiphi},
and~\eqref{covxiphicovxiphi} we get 
the statement of the proposition. 
\end{proof}
\begin{proposition}
\label{prop:commutator}
Let $(M,g,\phi,\xi,\eta)$ be a nearly pseudo-Sasakian manifold. 
Then 
\begin{equation*}
\begin{aligned}
\Big[\, \cov \phi, \cov_\xi \phi\, \Big] &  = 
  (\cov_\xi \phi)(\id + \cov_\xi \phi) \otimes \eta
 + \xi \otimes \Big( g\circ (\cov_\xi \phi) (\id - \cov_\xi
\phi)\Big). 
\end{aligned}
\end{equation*}

\end{proposition}
\begin{proof}
By Proposition~\ref{easyFacts}, we know that
$\cov_\xi \phi = \phi ( \phi  +   \cov \xi) $. 
Notice that for any three tensors $A$, $B$, and $C$, such that all pair-wise
compositions are defined,  we have
\begin{equation*}
 [A,B\circ C] = (A\circ B + B\circ A)\circ  C - B\circ (A\circ C + C\circ A) =
\left\{ A,B \right\} \circ C - B \circ \left\{ A,C \right\}. 
\end{equation*}
Thus to find the commutator of $\cov \phi$ with $\cov_\xi \phi$, we only have
to compute the anti-commutators of $\cov \phi$ with $\phi$ and $\cov \xi$. 

We start with the anticommutator between $\cov \phi$ and $\phi$. For this
we apply $\cov$ to the almost contact metric condition $\phi^2 = -\id + \xi
\otimes\eta$, which gives
\begin{equation}
\label{covXphiphi}
\big\{ \cov \phi, \phi \big\} = (\cov \phi) \phi + \phi (\cov \phi) =  (\cov \xi) \otimes \eta - 
\xi \otimes (g\circ \cov \xi), 
\end{equation}
where we are using $\cov \eta = - g \circ \cov \xi$ from Proposition~\ref{easyFacts}. 

To find the anticommutator between $\cov \phi$ and $\cov \xi$, we first
compute the anticommutator between $\phi$ and $\cov \xi$ and then apply
$\cov$ to the resulting formula. By Proposition~\ref{easyFacts}, we know that
$\cov \xi = - \phi - \phi \circ \cov_\xi \phi$ and that $\phi$ anticommutes
with $\phi \circ \cov_\xi \phi$. Therefore,
\begin{equation*}
\begin{aligned}
\phi \circ \cov \xi + \cov \xi \circ \phi = - 2\cdot \phi^2 = 2\cdot \id -
2\cdot \xi \otimes
\eta
\end{aligned}
\end{equation*}
 and hence 
\begin{equation}
\label{covXphi}
(\cov \phi) \circ (\cov \xi) + \phi \circ\cov^2 \xi + \cov^2 \xi \circ \phi +
(\cov \xi) \circ (\cov \phi)  = - 2 \cov \xi \otimes \eta + 2 \xi \otimes
(g \circ \cov \xi). 
\end{equation}
By Proposition~\ref{covtwoxi}, we know that
$\cov^2 \xi = - (\cov \xi)^2 \otimes \eta + \xi \otimes (g \circ (\cov \xi)^2)$. Since $\phi \xi =0$ and $\eta \circ \phi = 0$, we get 
\begin{equation}
\label{phicovtwoXxi}
\begin{aligned}
\phi \circ \cov^2 \xi &= - \phi (\cov \xi)^2  \otimes \eta
\\ \cov^2 \xi \circ \phi & = \xi \otimes \big(\, g \circ (\cov \xi)^2\circ
\phi\,\big). 
\end{aligned}
\end{equation}
Combining~\eqref{covXphi} with~\eqref{phicovtwoXxi} and then adding the result
to~\eqref{covXphiphi}, we get
\begin{multline*}
\big\{\, \cov \phi,  \phi + \cov \xi \,\big\} =
(\phi (\cov \xi)^2  - \cov \xi ) \otimes \eta 
+ \xi \otimes \Big(\, g \circ (  \cov \xi - (\cov \xi)^2 \phi ) \,\Big). 
\end{multline*}
Thus
\begin{equation*}
\begin{aligned}
\big[\cov \phi, \phi ( \phi + \cov \xi)  \big] 
  & =  \big\{\, \cov \phi , \phi \,\big\}\circ (\phi + \cov \xi) 
 - \phi \circ \big\{\, \cov \phi , \phi + \cov \xi \,\big\}  
 \\& = 
     - \xi \otimes \Big( g \circ \cov\xi \circ (\phi + \cov \xi) \Big)
- ( \phi^2 (\cov \xi)^2 -  \phi \cov \xi) \otimes \eta. 
\end{aligned}
\end{equation*}
Next we use that $\cov \xi + \phi + \phi\circ \cov_\xi \phi =0$ and $(\cov \xi)^2 =
(\cov_\xi \phi)^2 - \id + \xi \otimes \eta$ established in
Proposition~\ref{easyFacts} to bring the above expression to the form of the
proposition statement
\begin{equation*}
\begin{aligned}
\cov \xi \circ ( \phi + \cov \xi) & =  \phi\circ( \id + \cov_\xi \phi)\circ \phi \circ
(\cov_\xi \phi) = \phi^2 (\cov_\xi \phi) (\id - \cov_\xi \phi)\\ & =
- (\cov_\xi \phi)(\id - \cov_\xi \phi)\\
\phi^2 (\cov \xi)^2 - \phi \cov \xi & = \phi^2 ( (\cov_\xi \phi)^2 - \id + \xi
\otimes \eta)  + \phi^2 ( \id + \cov_\xi \phi) 
\\ & = - (\cov_\xi \phi) ( \id + \cov_\xi \phi). 
\end{aligned}
\end{equation*}
This completes the proof. 
\end{proof}
\begin{proposition}
\label{olszak}
Let $(M,g,\phi,\xi,\eta)$ be a nearly pseudo-Sasakian manifold. Then for any
$Y$ in the image of $\cov_\xi \phi$, the following equation holds
\begin{equation*}
(\cov \phi)\circ Y= \eta\otimes
((\cov_\xi \phi)Y) + \xi \otimes (g\circ ( \id - \cov_\xi \phi) Y) . 
\end{equation*}
\end{proposition}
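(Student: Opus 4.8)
The plan is to compute the composite $(1,2)$-tensor $(\cov \phi)\circ(\cov_\xi \phi)$ first and then to evaluate it on a vector lying in the image of $\cov_\xi \phi$. Since the product of two endomorphism-valued tensors is the half-sum of their anticommutator and commutator, I would write
\begin{equation*}
(\cov \phi)\circ(\cov_\xi \phi) = \tfrac{1}{2}\Big( \big\{\cov \phi, \cov_\xi \phi\big\} + \big[\cov \phi, \cov_\xi \phi\big]\Big)
\end{equation*}
and insert the two expressions furnished by Propositions~\ref{prop:anticommutator} and~\ref{prop:commutator}. Adding them, the two terms $(\cov_\xi \phi)(\id + \cov_\xi \phi)\otimes \eta$ of opposite sign cancel, the two identical summands $\xi \otimes (g\circ (\cov_\xi \phi)(\id - \cov_\xi \phi))$ combine, and the lone summand $2\,\eta \otimes (\cov_\xi \phi)^2$ coming from the anticommutator survives. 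Dividing by two I would obtain the clean identity
\begin{equation*}
(\cov \phi)\circ(\cov_\xi \phi) = \eta \otimes (\cov_\xi \phi)^2 + \xi \otimes \Big(g\circ (\cov_\xi \phi)(\id - \cov_\xi \phi)\Big).
\end{equation*}

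Next I would fix a point of $M$ and a vector $Y$ in the image of $\cov_\xi \phi$, so that $Y = (\cov_\xi \phi)Z$ for some $Z$, and evaluate the $(1,2)$-tensor identity above on the pair $(X,Z)$. On the left this produces $(\cov_X \phi)\big((\cov_\xi \phi)Z\big) = (\cov_X \phi)Y$, which is exactly $\big((\cov \phi)\circ Y\big)(X)$. On the right the first summand becomes $\eta(X)\,(\cov_\xi \phi)^2 Z = \eta(X)\,(\cov_\xi \phi)Y$, giving the term $\eta \otimes ((\cov_\xi \phi)Y)$. For the second summand I would use that $\cov_\xi \phi$ commutes with every polynomial in itself, so that
\begin{equation*}
(\cov_\xi \phi)(\id - \cov_\xi \phi)Z = (\id - \cov_\xi \phi)(\cov_\xi \phi)Z = (\id - \cov_\xi \phi)Y;
\end{equation*}
the second summand thus equals $g\big(X,(\id - \cov_\xi \phi)Y\big)\,\xi$, which is $\xi \otimes (g\circ(\id - \cov_\xi \phi)Y)$ evaluated at $X$. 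Reassembling the two pieces yields the asserted formula.

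The only delicate point is this last rewriting, and it is precisely where the hypothesis $Y \in \im(\cov_\xi \phi)$ is indispensable: the surviving $\xi$-term of $(\cov \phi)\circ(\cov_\xi \phi)$ carries the argument $(\cov_\xi \phi)(\id - \cov_\xi \phi)Z$, which collapses to $(\id - \cov_\xi \phi)Y$ only once we have written $Y = (\cov_\xi \phi)Z$. For a generic $Y$ there is no way to absorb the extra factor of $\cov_\xi \phi$, so the formula genuinely lives on the image of $\cov_\xi \phi$. Everything else is the bookkeeping cancellation between commutator and anticommutator, which is routine.
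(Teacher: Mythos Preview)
Your proof is correct and follows essentially the same route as the paper's: both compute $(\cov\phi)\circ(\cov_\xi\phi)$ as the half-sum of the anticommutator and commutator from Propositions~\ref{prop:anticommutator} and~\ref{prop:commutator}, then evaluate at a preimage $Z$ of $Y$. Your version is in fact slightly cleaner, since by adding the two identities \emph{before} evaluating you see that the $(\cov_\xi\phi)(\id+\cov_\xi\phi)\otimes\eta$ terms cancel outright, whereas the paper first normalizes $Z$ so that $\eta(Z)=0$ in order to kill those terms separately.
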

\begin{proof}
Let $Z$ be such that $(\cov_\xi \phi)Z =Y$.
Since $(\cov_\xi \phi)\xi =0$ we can assume that $\eta(Z)=0$ by replacing
$Z$ with $Z - \eta(Z) \xi$ if necessary. 
By Proposition~\ref{prop:anticommutator}, we get
\begin{equation*}
\begin{aligned}
\big\{ \cov \phi, \cov_\xi \phi \big\}\circ Z &  = 
2 \eta \otimes ((\cov_\xi \phi)^2 Z )
  + \xi \otimes \Big( g\circ (\cov_\xi \phi)\circ (\id - \cov_\xi
\phi)\circ Z\Big)
\\ & = 2 \eta \otimes ((\cov_\xi \phi) Y) + \xi \otimes \Big( 
g \circ (\id - \cov_\xi \phi) \circ Y \Big).
\end{aligned}
\end{equation*}
Next, by Proposition~\ref{prop:commutator}, we have
\begin{equation*}
\begin{aligned}
\Big[\, \cov \phi, \cov_\xi \phi\, \Big]\circ Z &  = 
  \xi \otimes \Big( g\circ (\cov_\xi \phi) \circ (\id - \cov_\xi
\phi)\circ Z\Big) = \xi \otimes \Big( 
g \circ (\id - \cov_\xi \phi) \circ Y \Big). 
\end{aligned}
\end{equation*}
Thus 
\begin{equation*}
\begin{aligned}
(\cov \phi) \circ Y & = (\cov \phi) \circ (\cov_\xi \phi)\circ Z = 
(1 / 2) \Big(\,
\big\{ \cov \phi, \cov_\xi \phi \big\}\circ Z 
+ 
\Big[\, \cov \phi, \cov_\xi \phi\, \Big]\circ Z 
 \,\Big)  
\\ & = 
 \eta \otimes ((\cov_\xi \phi) Y) + \xi \otimes \Big( 
g \circ (\id - \cov_\xi \phi) \circ Y \Big).
\end{aligned}
\end{equation*}
This finishes the proof. 
\end{proof}
In the next proposition we use that $g$ is positively defined to conclude that
$(\cov_\xi \phi)^2 Y = 0$ implies $(\cov_\xi \phi)Y=0$. This can be false for a
general nearly pseudo-Sasakian manifold. 
\begin{proposition}
\label{minogiulia}
Let $(M,g, \phi,\xi)$ be a nearly Sasakian manifold. Then for any $Y\in\Gamma( \ker
( (\cov\xi)^2 +\id))$, one has 
$(\cov \phi)\circ Y = \xi \otimes (g \circ Y)$. 
\end{proposition}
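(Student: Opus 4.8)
The plan is to compute $(\cov_X\phi)Y$, for $Y$ a section of $\ker((\cov\xi)^2+\id)$, by trading the unknown $\cov\phi$ for the second covariant derivative $\cov^2\xi$, which is already given explicitly in Proposition~\ref{covtwoxi}; the price is a correction term that I will then discard using Proposition~\ref{olszak} and the positivity of $g$.

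First I would unwind the hypothesis. By part (v) of Proposition~\ref{easyFacts}, $(\cov_\xi\phi)^2=(\cov\xi)^2+\id-\xi\otimes\eta$, so $(\cov_\xi\phi)^2Y=-\eta(Y)\xi$ whenever $((\cov\xi)^2+\id)Y=0$; pairing with $\xi$ and using self-adjointness together with $(\cov_\xi\phi)\xi=0$ gives $\eta(Y)=0$, hence $(\cov_\xi\phi)^2Y=0$. Exactly here the positivity of $g$ enters: since $\cov_\xi\phi$ is skew-symmetric, $(\cov_\xi\phi)^2Y=0$ forces $(\cov_\xi\phi)Y=0$. Thus $Y$ is a section of $\ker(\cov_\xi\phi)\cap\xi^\perp$, with $\eta(Y)=0$ and $(\cov\xi)^2Y=-Y$.

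The key step is to introduce $\Phi:=\cov\xi+\phi$. By part (iv) of Proposition~\ref{easyFacts} one has $\Phi=-\phi\circ\cov_\xi\phi=\cov_\xi\phi\circ\phi$ (the two operators anticommuting), so $\Phi$ annihilates $\ker(\cov_\xi\phi)$ and has image contained in $\im(\cov_\xi\phi)$. Writing $\phi=\Phi-\cov\xi$ and differentiating gives $\cov_X\phi=\cov_X\Phi-\cov_X(\cov\xi)$. Evaluating on the section $Y$, the identity $\Phi Y=0$ yields $(\cov_X\Phi)Y=-\Phi(\cov_X Y)$, while Proposition~\ref{covtwoxi} together with $(\cov\xi)^2Y=-Y$ and $\eta(Y)=0$ gives $(\cov_X(\cov\xi))Y=\cov^2_{X,Y}\xi=-g(X,Y)\xi$. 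Hence
\[
(\cov_X\phi)Y=g(X,Y)\xi-\Phi(\cov_X Y),\qquad \Phi(\cov_X Y)\in\im(\cov_\xi\phi).
\]

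The main obstacle is that $\cov_X Y$ need not stay tangent to $\ker(\cov_\xi\phi)$, so the correction $\Phi(\cov_X Y)$ is not visibly zero. I would remove it by proving independently that $(\cov_X\phi)Y$ is orthogonal to $\im(\cov_\xi\phi)$: for $W\in\im(\cov_\xi\phi)$, skew-symmetry of $\cov_X\phi$ (inherited from $\phi$ as $\cov g=0$) gives $g((\cov_X\phi)Y,W)=-g(Y,(\cov_X\phi)W)$, and expanding $(\cov_X\phi)W$ by Proposition~\ref{olszak} and using $(\cov_\xi\phi)Y=0$, $\eta(Y)=0$ makes the right-hand side vanish. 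Pairing the displayed identity with an arbitrary $W\in\im(\cov_\xi\phi)$, and noting $\xi\perp\im(\cov_\xi\phi)$, then forces $g(\Phi(\cov_X Y),W)=0$ for all such $W$; as $\Phi(\cov_X Y)$ itself lies in $\im(\cov_\xi\phi)$ and $g$ is positive definite there, I conclude $\Phi(\cov_X Y)=0$. This leaves $(\cov_X\phi)Y=g(X,Y)\xi$, that is $(\cov\phi)\circ Y=\xi\otimes(g\circ Y)$.
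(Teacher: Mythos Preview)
Your proof is correct. Both your argument and the paper's hinge on the identity $\phi+\cov\xi=-\phi\circ\cov_\xi\phi$ from Proposition~\ref{easyFacts}, deduce $(\phi+\cov\xi)Y=0$, differentiate, and handle the $\cov^2\xi$ contribution via Proposition~\ref{covtwoxi}; they part ways only in how the residual term $(\phi+\cov\xi)(\cov_X Y)$ is eliminated. The paper differentiates the eigenvector equation $(\cov\xi)^2Y=-Y$ directly and uses the formula for $\cov^2\xi$ a second time to show that $\cov_X Y$ already lies in $\ker\bigl((\phi\cov_\xi\phi)^2\bigr)=\ker(\phi+\cov\xi)$. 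You instead invoke Proposition~\ref{olszak}: skew-symmetry of $\cov_X\phi$ converts the problem into evaluating $g(Y,(\cov_X\phi)W)$ for $W\in\im(\cov_\xi\phi)$, which that proposition makes vanish, and then positive-definiteness of $g$ on $\im(\cov_\xi\phi)$ forces the correction to zero. Your route uses Proposition~\ref{olszak} essentially, so the paper's proof of this proposition is logically independent of that earlier result; on the other hand, your argument sidesteps the somewhat fiddly differentiation of the eigenvector condition and is arguably cleaner once Proposition~\ref{olszak} is in hand.
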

\begin{proof}
Throughout the proof we will use that by Proposition~\ref{covtwoxi}, we have 
\begin{equation}
\label{eq:covtwoxi}
\begin{aligned}
\cov^2 \xi = - (\cov\xi)^2
\otimes \eta + \xi \otimes (g \circ (\cov\xi)^2) .
\end{aligned}
\end{equation}
First we show that $\im(\cov Y) \subset \ker(\phi\cov_\xi \phi)$.  
Since $(\cov \xi)^2 Y = -Y$, we have
\begin{equation*}
\cov Y = - \cov ( (\cov \xi)^2 Y) = - \cov^2 \xi \circ \cov \xi \circ Y - \cov
\xi \circ \cov^2 \xi \circ Y - (\cov \xi)^2 \circ \cov Y. 
\end{equation*}

Since $\eta \circ \cov \xi =0$ and $(\cov \xi)\xi =0$ by
Proposition~\ref{easyFacts}, using~\eqref{eq:covtwoxi}, we get
\begin{equation*}
\begin{aligned}
\cov Y & = - \xi \otimes \big(\, g \circ (\cov \xi)^3 \circ Y\big) 
+\eta(Y) \otimes (\cov\xi)^3 - (\cov \xi)^2 \circ \cov Y. 
\end{aligned}
\end{equation*}
Notice that
\begin{equation*}
\eta(Y) = g(\xi, Y) = - g( \xi ,(\cov \xi)^2 Y) = 0
\end{equation*}
thus, taking into account $(\cov \xi)^2 Y = -Y$, we get
\begin{equation*}
\cov Y = \xi \otimes (g \circ (\cov \xi) Y) - (\cov \xi)^2  \circ\cov Y.  
\end{equation*}
Applying $\cov$ to $0 = \eta(Y) = g\circ (\xi \otimes Y)$, we get 
$g \circ (\cov \xi \otimes Y) + g \circ (\xi \otimes \cov Y)=0$. Since
$\cov \xi$ is skew-symmetric, this implies that $g\circ (\cov \xi)Y =
\eta\circ \cov Y$. 
Thus
\begin{equation*}
\cov Y = (\xi \otimes \eta)\circ(\cov Y) - (\cov \xi)^2 \circ \cov Y.
\end{equation*}
The above equation means that the image of $\cov Y$ is a subset of the kernel of
the operator $(\cov \xi)^2 - \xi \otimes \eta + \id$. By
Proposition~\ref{easyFacts} this operator equals to $(\phi\cov_\xi \phi)^2$. 
Since $\phi\cov_\xi \phi$ is skew-symmetric by the same proposition
 and $g$ is positively defined by assumption, we get
that $\im(\cov Y) \subset \ker (\phi\cov_\xi \phi) = \ker(\phi + \cov \xi)$. 
Thus
$(\phi + \cov \xi) \circ \cov Y=0$. 

Next, we claim that $(\phi\cov_\xi \phi)Y=0$. For this we compute
\begin{equation*}
\begin{aligned}
(\phi\cov_\xi \phi)^2 Y= \left( (\cov\xi)^2 + \id - \xi \otimes \eta \right) Y =
-Y + Y - 0 = 0. 
\end{aligned}
\end{equation*}
Therefore, arguing as before, we have $(\phi + \cov \xi)Y=0$. Applying
$\cov$ to this equation, we get
\begin{equation*}
0 = (\cov \phi + \cov^2 \xi)\circ Y + (\phi + \cov \xi) \circ \cov Y = 
(\cov \phi )\circ Y + \xi \otimes \left( g\circ (\cov \xi)^2 Y \right) =
(\cov \phi) \circ Y - \xi \otimes (g \circ Y) . 
\end{equation*}
This concludes the proof. 
\end{proof}
\begin{theorem}[\cite{agi}]\label{mainish}
On every nearly Sasakian manifold $(M,g,\phi,\xi,\eta)$ 
\begin{equation}
\label{covphi}
\begin{aligned}
(\cov_X\phi ) Y&  = g(X,Y)\xi - \eta(Y) X \\
 & \phantom{=} +\eta(X) (\cov_\xi \phi)Y - \eta(Y) (\cov_\xi \phi)X - g
\left(  X, (\cov_\xi \phi)Y \right) \xi. 
\end{aligned}
\end{equation}
Equivalently 
\begin{equation*}
\cov \phi = \xi \otimes g - \id \otimes \eta + \eta \otimes (\cov_\xi \phi) -
(\cov_\xi \phi) \otimes \eta - \xi \otimes \big(\, g\circ (\cov_\xi \phi)
\,\big). 
\end{equation*}
\end{theorem}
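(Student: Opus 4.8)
The plan is to exploit that both sides of~\eqref{covphi} are $(1,2)$-tensors, hence pointwise $\R$-linear in $Y$ for fixed $X$; so it suffices to verify the identity for $Y$ ranging over a subset of $T_xM$ that spans $T_xM$ at each point $x$. I would use the $g$-orthogonal decomposition
\begin{equation*}
T_xM = \langle \xi\rangle \oplus \im(\cov_\xi \phi) \oplus \big(\im(\cov_\xi \phi)^\perp \cap \xi^\perp\big),
\end{equation*}
which is legitimate because, by Proposition~\ref{easyFacts}, $\cov_\xi \phi$ is skew-symmetric with respect to the positive-definite metric $g$: hence $\im(\cov_\xi \phi)^\perp = \ker(\cov_\xi \phi)$, the subspaces $\im(\cov_\xi\phi)$ and $\ker(\cov_\xi\phi)$ are orthogonal complements with $\im(\cov_\xi\phi)\oplus\ker(\cov_\xi\phi)=T_xM$, and $\xi \in \ker(\cov_\xi \phi)$ since $(\cov_\xi\phi)\xi = 0$. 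Verifying~\eqref{covphi} on each of the three summands will finish the proof.

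For $Y = \xi$ I would use equation~\eqref{covxphixi} from the proof of Proposition~\ref{easyFacts}, namely $(\cov_X\phi)\xi + (\cov_\xi\phi)X = \phi^2 X$, which gives $(\cov_X\phi)\xi = \eta(X)\xi - X - (\cov_\xi\phi)X$; substituting $Y=\xi$ into the right-hand side of~\eqref{covphi} and using $\eta(\xi)=1$ together with $(\cov_\xi\phi)\xi = 0$ produces exactly this expression. For $Y$ in the image of $\cov_\xi\phi$, the identity is precisely Proposition~\ref{olszak}: such $Y$ satisfy $\eta(Y) = 0$ (they are orthogonal to $\ker(\cov_\xi\phi)$, which contains $\xi$), and with $\eta(Y)=0$ the right-hand side of~\eqref{covphi} collapses to $\eta(X)(\cov_\xi\phi)Y + g\big(X,(\id - \cov_\xi\phi)Y\big)\xi$, which is the content of that proposition written as $(\cov\phi)\circ Y$.

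The remaining summand requires first the identification
\begin{equation*}
\im(\cov_\xi\phi)^\perp \cap \xi^\perp = \ker\big((\cov\xi)^2 + \id\big).
\end{equation*}
I would obtain it from $\im(\cov_\xi\phi)^\perp = \ker(\cov_\xi\phi)$ together with the identity $(\cov_\xi\phi)^2 = (\cov\xi)^2 + \id - \xi\otimes\eta$ of Proposition~\ref{easyFacts}: on $\xi^\perp$ the term $\xi\otimes\eta$ vanishes, positive-definiteness gives $\ker(\cov_\xi\phi) = \ker\big((\cov_\xi\phi)^2\big)$, and the eigenspace $\ker((\cov\xi)^2 + \id)$ is automatically contained in $\xi^\perp$ since it is orthogonal to the $0$-eigenspace $\langle\xi\rangle$ of the symmetric operator $(\cov\xi)^2$ (Theorem~\ref{nearly_implies_contact}, Proposition~\ref{TMdecomposition}). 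On this summand Proposition~\ref{minogiulia} yields $(\cov_X\phi)Y = g(X,Y)\xi$, and since here $\eta(Y)=0$ and $(\cov_\xi\phi)Y=0$ the right-hand side of~\eqref{covphi} reduces to the same expression. Combining the three cases by linearity in $Y$ gives~\eqref{covphi}, and the equivalent tensorial form follows by reading off the coefficients of $\xi\otimes g$, $\id\otimes\eta$, $\eta\otimes(\cov_\xi\phi)$, $(\cov_\xi\phi)\otimes\eta$ and $\xi\otimes(g\circ\cov_\xi\phi)$.

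The main obstacle is not any single computation — each of the three cases is a direct substitution into a previously proved proposition — but rather assembling the correct $g$-orthogonal decomposition of $T_xM$ and identifying its third summand with $\ker((\cov\xi)^2+\id)$. Both steps rest on $g$ being positive definite (so that $\im(\cov_\xi\phi)\oplus\ker(\cov_\xi\phi)=T_xM$ and $\ker(\cov_\xi\phi)=\ker((\cov_\xi\phi)^2)$), which is exactly the point where Riemannian positivity enters, consistent with the remarks preceding Theorem~\ref{nearly_implies_contact} and Proposition~\ref{minogiulia}.
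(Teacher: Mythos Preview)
Your proposal is correct and follows essentially the same approach as the paper. The paper phrases the decomposition via the eigenspaces $V_k$ of $(\cov\xi)^2$ from Proposition~\ref{TMdecomposition}, splitting into the cases $\lambda_k=0$, $\lambda_k=1$, and $\lambda_k\notin\{0,1\}$, whereas you go straight to $\langle\xi\rangle\oplus\im(\cov_\xi\phi)\oplus(\ker(\cov_\xi\phi)\cap\xi^\perp)$; but these are the same three pieces (indeed the introduction to Section~4 announces the argument in exactly your terms), and in each case both proofs invoke the identical auxiliary result --- the polarized defining relation for $Y=\xi$, Proposition~\ref{olszak} on $\im(\cov_\xi\phi)$, and Proposition~\ref{minogiulia} on $\ker((\cov\xi)^2+\id)$.
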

\begin{proof}
By Proposition~\ref{nearly_implies_contact} the spectrum of $(\cov \xi)^2$ is 
non-positive and the multiplicity of $0$ is one. Let $0< \lambda_1 <\dots <
\lambda_\ell$ be such that $(0,-\lambda_1,\dots,-\lambda_\ell)$ is the spectrum
of $(\cov \xi)^2$. 
By Proposition~\ref{TMdecomposition} the vector bundle $TM$ can be
written as a direct orthogonal sum of the subbundles $V_0$, $V_{1}$,\dots,
$V_\ell$ such that $(\cov \xi)^2|_{V_0} = 0$ and $(\cov \xi)^2|_{V_k} = -
\lambda_k \cdot \id$ with positive $\lambda_k$'s. 
Thus every vector field $Y$ on $M$ can be written as a sum 
$\eta(Y) \xi + \sum_{k=1}^{\ell} Y_k$, where $Y_k$ are such that $(\cov \xi)^2
Y_k = -\lambda_k Y_k$ and $\eta(Y_k) =0$. 

Since both sides of~\eqref{covphi} are linear over $C^\infty(M)$ with respect
to  $Y$, we have to check the validity of~\eqref{covphi} only for $\xi$ and $Y_k$'s
such that $(\cov \xi)^2 Y_k = -\lambda_k Y_k$ and~$\eta(Y_k)=0$. 

For $Y=\xi$ the formula~\eqref{covphi} reduces to 
\begin{equation*}
(\cov_X \phi)\xi = \eta(X) \xi - X - (\cov_\xi \phi)X. 
\end{equation*}
We can see that it holds on every nearly Sasakian manifold by substituting 
$(\xi,X)$ into the defining relation $(\cov \phi - \xi \otimes g + \eta \otimes
\id )(1 + (1,2))=0$. 

Now suppose $Y$ is such that $(\cov \xi)^2 Y = -Y$ and $\eta(Y)=0$.
 By
Proposition~\ref{minogiulia} we know that $(\cov_X \phi)Y = g(X,Y) \xi$. 
Next, from the equality 
\begin{equation}
\label{covxia}
\begin{aligned}
(\cov \xi)^2 - \xi \otimes \eta + \id = (\cov_\xi \phi)^2
\end{aligned}
\end{equation}
proved in Proposition~\ref{easyFacts}, we get that $(\cov_\xi \phi)^2 Y=0$.
Since $\cov_\xi \phi$ is skew-symmetric and $g$ is positively defined, we
conclude that $(\cov_\xi \phi)Y =0$. 
Thus evaluating the right side of~\eqref{covphi} we also get $g(X,Y)\xi$.

Now assume $Y$ is such that $\eta(Y)=0$ and $(\cov \xi)^2 Y = -\lambda Y$
with $\lambda\not\in\left\{ 0,1 \right\}$. 
Then from~\eqref{covxia}, we get $(\cov_\xi \phi)^2 Y = (1-\lambda)Y$ and
$(1-\lambda) \not=0$. This shows that $Y$ is in the image of $\cov_\xi\phi$ and
we can apply Proposition~\ref{olszak} to compute $(\cov_X \phi)Y$. 
We get
\begin{equation*}
(\cov_X \phi)Y = \eta(X) (\cov_\xi \phi)Y + g( X, Y - (\cov_\xi \phi)Y )\xi.
\end{equation*}
Since $\eta(Y)=0$ the right side of~\eqref{covphi} evaluates to
the same expression. This concludes the proof. 
\end{proof}
\begin{remark}
\label{iff}
It follows from~\eqref{covphi} that a nearly Sasakian manifold is Sasakian if
and only if $\cov_\xi \phi =0$. In fact, if $\cov_\xi \phi=0$,
then~\eqref{covphi}
implies 
\begin{equation}
\label{sas}
\begin{aligned}
(\cov_X \phi)Y = g(X,Y)\xi - \eta(Y)X,
\end{aligned}
\end{equation}
 which is the defining
condition of Sasakian structures. In the opposite direction, if $M$ is a
Sasakian manifold, then computing $\cov_\xi \phi$ by~\eqref{sas} we get zero. 
\end{remark} 

\begin{proposition}
\label{strange}
Let  $(M,g,\phi,\xi,\eta)$  be a 
nearly Sasakian manifold. Denote $g\circ (\phi \otimes \id)$ by $\Phi$ and $g \circ ( \cov_\xi \phi \otimes \id)$ by
$\Psi$. Then $\Phi$ and $\Psi$ are differential forms and
\begin{equation*}
d\Phi = 3 \eta \wedge \Psi,\quad \eta \wedge d \Psi = 0,\quad d\eta \wedge \Psi =0. 
\end{equation*}
\end{proposition}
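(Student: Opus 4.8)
The first claim is immediate: $\phi$ is skew-symmetric by the definition of an almost contact metric structure, and $\cov_\xi\phi$ is skew-symmetric by Proposition~\ref{easyFacts}, so $\Phi = g\circ(\phi\otimes\id)$ and $\Psi = g\circ(\cov_\xi\phi\otimes\id)$ are both $2$-forms.

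My plan for $d\Phi = 3\,\eta\wedge\Psi$ is to feed the explicit expression for $\cov\phi$ furnished by Theorem~\ref{mainish} into the exterior derivative. Since the Levi-Civita connection is torsion-free and metric, $(\cov_X\Phi)(Y,Z) = g((\cov_X\phi)Y,Z)$ and $d\Phi(X,Y,Z)$ is the cyclic sum $(\cov_X\Phi)(Y,Z) + (\cov_Y\Phi)(Z,X) + (\cov_Z\Phi)(X,Y)$. Substituting~\eqref{covphi} and rewriting $g(X,(\cov_\xi\phi)Y) = -\Psi(X,Y)$ via skew-symmetry, the first summand becomes
\[
(\cov_X\Phi)(Y,Z) = g(X,Y)\eta(Z) - g(X,Z)\eta(Y) + \eta(X)\Psi(Y,Z) - \eta(Y)\Psi(X,Z) + \eta(Z)\Psi(X,Y).
\]
In the cyclic sum the purely metric contributions cancel by symmetry of $g$, while the remaining $\Psi$-terms, after applying the skew-symmetry of $\Psi$, each collapse to $\eta(X)\Psi(Y,Z) + \eta(Y)\Psi(Z,X) + \eta(Z)\Psi(X,Y) = (\eta\wedge\Psi)(X,Y,Z)$, giving three equal copies and hence $d\Phi = 3\,\eta\wedge\Psi$. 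I expect this cyclic-sum bookkeeping to be the only place demanding care; everything afterwards is formal.

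For the last two identities the plan is to switch entirely to the Cartan calculus, writing $\iota_\xi$ for contraction with $\xi$. From $\phi\xi = 0$ and $(\cov_\xi\phi)\xi = 0$ (Proposition~\ref{easyFacts}) one has $\iota_\xi\Phi = 0$ and $\iota_\xi\Psi = 0$; moreover $\iota_\xi\eta = 1$ and $\iota_\xi d\eta = -2g(\cov_\xi\xi,\cdot) = 0$, so that $\lie_\xi\eta = \iota_\xi d\eta + d\,\iota_\xi\eta = 0$. Cartan's formula combined with $d\Phi = 3\,\eta\wedge\Psi$ then yields $\lie_\xi\Phi = \iota_\xi d\Phi + d\,\iota_\xi\Phi = \iota_\xi(3\,\eta\wedge\Psi) = 3\Psi$. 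Applying $d$ and using that $d$ commutes with $\lie_\xi$ together with $\lie_\xi\eta = 0$,
\[
3\,d\Psi = d\,\lie_\xi\Phi = \lie_\xi d\Phi = 3\,\lie_\xi(\eta\wedge\Psi) = 3\,\eta\wedge\lie_\xi\Psi,
\]
so $d\Psi = \eta\wedge\lie_\xi\Psi$, and therefore $\eta\wedge d\Psi = \eta\wedge\eta\wedge\lie_\xi\Psi = 0$. Finally, differentiating $d\Phi = 3\,\eta\wedge\Psi$ gives $0 = d^2\Phi = 3(d\eta\wedge\Psi - \eta\wedge d\Psi)$, whence $d\eta\wedge\Psi = \eta\wedge d\Psi = 0$. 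In this way all three relations follow, the only genuine computation being the determination of $d\Phi$.
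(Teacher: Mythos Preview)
Your proof is correct. The computation of $d\Phi$ is essentially the same as the paper's: both feed the explicit formula for $\cov\phi$ from Theorem~\ref{mainish} into the cyclic-sum expression for the exterior derivative and observe that the metric terms cancel while the $\Psi$-terms assemble into $3\,\eta\wedge\Psi$. Likewise, both obtain $d\eta\wedge\Psi = 0$ from $d^2\Phi = 0$ once $\eta\wedge d\Psi = 0$ is known.

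Where you genuinely diverge is in the proof of $\eta\wedge d\Psi = 0$. The paper argues directly: it expands $\cov(\cov_\xi\phi) = \cov^2\phi\circ(\xi\otimes\id) + (\cov\phi)\circ(\cov\xi\otimes\id)$ and then, invoking the explicit formulas for $R_\xi\phi$ (Proposition~\ref{curvexi}) and $\cov^2_\xi\phi$ (Proposition~\ref{covxiphi}), shows that $(\cov_X(\cov_\xi\phi))Y$ is proportional to $\xi$ whenever $X,Y\in\ker\eta$; this forces $d\Psi$ to vanish on $\ker\eta$. Your route is pure Cartan calculus: from $\iota_\xi\Phi=\iota_\xi\Psi=\iota_\xi d\eta=0$ and $\iota_\xi\eta=1$ you extract $\lie_\xi\Phi=3\Psi$, then use $[d,\lie_\xi]=0$ and $\lie_\xi\eta=0$ to obtain $d\Psi=\eta\wedge\lie_\xi\Psi$, whence $\eta\wedge d\Psi=0$ immediately. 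Your argument is shorter and avoids any appeal to the curvature formulas of Propositions~\ref{curvexi} and~\ref{covxiphi}; the paper's approach, by contrast, yields the finer pointwise information that $(\cov_X(\cov_\xi\phi))Y\in\langle\xi\rangle$ for $X,Y\perp\xi$, which is more than is needed here but of independent interest.
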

\begin{proof}
The operator $\phi$ is skew-symmetric by definition of an almost contact
metric structure, and $\cov_\xi \phi$ is skew-symmetric by
Proposition~\ref{easyFacts}. This implies that both $\Phi$ and $\Psi$ are two
forms. 

By definition of the exterior differential we have
\begin{equation*}
d\Phi = g \circ (\cov \phi \otimes \id) \circ ( 1 + (1,2,3) + (1,3,2)). 
\end{equation*}
By Theorem~\ref{mainish}, we have 
\begin{equation}
\label{covphib}
\cov \phi = \xi \otimes g - \id \otimes \eta + \eta \otimes (\cov_\xi \phi) -
(\cov_\xi \phi) \otimes \eta - \xi \otimes \big(\, g\circ (\cov_\xi \phi)
\,\big). 
\end{equation}
Notice that
\begin{equation*}
\begin{aligned}
g & \circ ( \xi \otimes g \otimes \id)  = g \otimes \eta\\
g & \circ ( -\id \otimes \eta \otimes \id)  = - (g \otimes \eta) (2,3)\\ 
g & \circ (\eta \otimes (\cov_\xi \phi) \otimes \id) = \eta \otimes \Psi\\
g & \circ ( -(\cov_\xi \phi) \otimes \eta \otimes \id )  = - (\eta \otimes \Psi
)(1,2) \\
g & \circ ( -\xi \otimes \big(\, g\circ (\cov_\xi \phi))\otimes \id \,\big) = 
- (\eta \otimes \Psi)(1, 3). 
\end{aligned}
\end{equation*}
Next observe that for every $\sigma \in \left\{ (1,2), (2,3), (1,3) \right\}$
we have $\sigma (1 + (1,2,3) +(1,3,2)) = (1,2) + (2,3) +(1,3)$. 
Hence
\begin{align*}
(g\otimes \eta)(1 - (2,3)) (1 + (1,2,3) + (1,3,2)) = (g\otimes \eta) (1 -(1,2)) (1
+ (1,2,3) + (1,3,2))
\end{align*}
vanishes,
since $g$ is symmetric.
Therefore
\begin{equation*}
\begin{aligned}
d\Phi & = (\eta\otimes \Psi) ( 1 - (1,2) - (1,3)) (1 + (1,2,3) +(1,3,2))
\\ & = (\eta\otimes \Psi) (1 - 2\cdot (2,3)) (1 + (1,2,3) +(1,3,2))
\\ & = 3 (\eta\otimes \Psi) (1 +(1,2,3) +(1,3,2)) = 3 \eta \wedge \Psi, 
\end{aligned}
\end{equation*}
where we used $(\eta \otimes \Psi)(2,3) = - \eta\otimes \Psi$. 
Now $0 = d^2 \Phi = 3 (d\eta \wedge \Psi + \eta\wedge d\Psi) $
implies that $d\eta \wedge \Psi = - \eta \wedge d\Psi$. Thus it is enough to
show only $\eta \wedge d\Psi =0$. 
For this we have to check that for any $X$, $Y$, $Z \in \ker(\eta)$ one has
$d\Psi (X,Y,Z) =0$. 
In fact we will show that $(\cov_X(\cov_\xi \phi))Y$ is proportional to
$\xi$ for any $X$, $Y\in \ker(\eta)$. Then the result will follow from the
definitions of $\Psi$ and the exterior derivative $d$. We have
\begin{equation*}
\begin{aligned}
\cov (\cov_\xi \phi) & = 
  \cov (  (\cov \phi) \circ ( \xi \otimes \id)) 
 = \cov^2 \phi \circ ( \xi \otimes \id) 
+ (\cov \phi) \circ (\cov \xi \otimes \id). 
\end{aligned}
\end{equation*}
Applying~\eqref{covphib}, we get
\begin{equation*}
(\cov \phi)\circ (\cov \xi \otimes \id) = 
\xi \otimes (g \circ (\cov \xi \otimes \id)) - (\cov \xi)\otimes \eta -
(\cov_\xi \phi) (\cov \xi)\otimes \eta - \xi \otimes (g \circ (\cov \xi \otimes
\cov_\xi \phi)). 
\end{equation*}
Evaluating the right side of the above equation on $(X,Y)$ with $Y\in
\ker(\eta)$ we get a vector field proportional to $\xi$. Thus it is left to
show that $(\cov^2_{X,\xi}\phi)Y$ is proportional to $\xi$. We have
\begin{equation*}
(\cov^2_{X,\xi}\phi)Y = -(R_{\xi,X}\phi) Y + (\cov^2_{\xi,X}\phi)Y,
\end{equation*}
and therefore we can use the expressions for $R_{\xi}\phi$ and $\cov^2_{\xi} \phi$
obtained in Proposition~\ref{curvexi} and in Proposition~\ref{covxiphi},
respectively.
Namely, we have $R_{\xi}\phi = -  (\cov \xi)^2 \phi \otimes \eta - (g\circ \phi(\cov \xi)^2)
\otimes \xi$, which implies that $(R_{\xi,X}\phi)Y$ is proportional to $\xi$ for
$Y\in \ker(\eta)$. Further, $\cov^2_\xi \phi = \eta \wedge (\cov_\xi \phi \circ \cov \xi) - \xi \otimes \Big(g
\circ  (\cov_\xi \phi \circ \cov \xi)\Big)$ implies that $(\cov^2_{\xi,X}\phi)Y$ is proportional to
$\xi$ for $X$, $Y\in \ker(\eta)$.  
This concludes the proof. 
\end{proof}
Notice that we did not use $\dim M \ge 7$ in the above proposition. 
\begin{theorem}\label{main}
Let $(M,g,\phi,\xi,\eta)$ be a nearly Sasakian manifold of dimension greater or
equal to $7$. Then $M$ is a Sasakian manifold. 
\end{theorem}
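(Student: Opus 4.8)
By Remark~\ref{iff} the manifold is Sasakian exactly when $h:=\cov_\xi\phi$ vanishes, so the whole problem is to prove $h\equiv 0$ as soon as $\dim M\ge 7$. The plan is to analyze $h$ through the spectrum of $(\cov\xi)^2$. On $\xi^\perp$ Proposition~\ref{easyFacts} gives $h^2=(\cov\xi)^2+\id$, and since $h$ is $g$-skew-symmetric and $g$ is positive definite we have $h^2\le 0$; hence every eigenvalue $-\lambda$ of $(\cov\xi)^2$ satisfies $\lambda\ge 1$, and $h=0$ precisely on the bundles where $\lambda\in\{0,1\}$ (because $h^2Y=0$ forces $g(hY,hY)=-g(h^2Y,Y)=0$). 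By Theorem~\ref{Reebcharacteristicpolynomial} these eigenvalues are constant, so Proposition~\ref{TMdecomposition} yields a global $g$-orthogonal splitting $TM=\langle\xi\rangle\oplus\bigoplus_{k\ge 1}V_{\lambda_k}$; thus $h\ne 0$ somewhere if and only if some $\lambda_k>1$ occurs.

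Next I would exploit the identities of Proposition~\ref{strange}, above all $d\eta\wedge\Psi=0$, where $\Psi=g\circ(h\otimes\id)$ and $d\eta=-2\,g\circ\cov\xi$. Both $\cov\xi$ and $h$ commute with $(\cov\xi)^2$, hence preserve each $V_{\lambda_k}$, so $d\eta$ and $\Psi$ are block-diagonal for the splitting; moreover $\Psi$ vanishes on the blocks with $\lambda\in\{0,1\}$, while $d\eta$ is non-degenerate on every block with $\lambda\ne 0$. Writing $\omega_\bullet,\psi_\bullet$ for the restrictions of $d\eta,\Psi$, the contribution of a pair of distinct eigenvalues $\lambda\ne\mu$ to $d\eta\wedge\Psi$ lives in $\Lambda^2V_\lambda^\ast\otimes\Lambda^2V_\mu^\ast$ and equals $\omega_\lambda\wedge\psi_\mu+\omega_\mu\wedge\psi_\lambda$. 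On each block with $\lambda\ne 0$ the two restrictions $\omega_\lambda$ and $\psi_\lambda$ turn out to be linearly independent (from the normal form below), so this pair-term can vanish only if $\psi_\lambda=\psi_\mu=0$, i.e. $\lambda,\mu\in\{0,1\}$. Consequently, if any eigenvalue exceeds $1$ it is the unique nonzero eigenvalue; so either $h\equiv 0$ and $M$ is Sasakian, or $\xi^\perp=V_\mu$ for a single constant $\mu>1$.

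To dispose of the second case I would identify a quaternion-Hermitian structure on $V_\mu=\xi^\perp$. Setting $c:=\mu-1>0$, Proposition~\ref{easyFacts} shows that $I:=\phi$ and $J:=\tfrac{1}{\sqrt c}h$ are $g$-compatible almost complex structures that anticommute, so with $K:=IJ=\tfrac{1}{\sqrt c}\phi h$ the triple $(I,J,K)$ makes $\xi^\perp$ a quaternion-Hermitian space and forces $\dim\xi^\perp=4q$. With fundamental forms $\omega_I,\omega_J,\omega_K$ one computes, using $\cov\xi=-\phi-\phi h$, that $d\eta=2\omega_I+2\sqrt c\,\omega_K$ and $\Psi=\sqrt c\,\omega_J$ (these are linearly independent, justifying the claim above), whence $d\eta\wedge\Psi=2\sqrt c\,\omega_I\wedge\omega_J+2c\,\omega_K\wedge\omega_J$. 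For $q=1$ the three forms are self-dual and $\omega_A\wedge\omega_B=0$ for $A\ne B$, so $d\eta\wedge\Psi=0$ holds automatically and $\dim M=5$ is genuinely possible. For $q\ge 2$, splitting $\xi^\perp$ into quaternionic $4$-planes produces cross terms, and the linear independence of $\omega_I,\omega_K$ forces $d\eta\wedge\Psi\ne 0$, contradicting Proposition~\ref{strange}. Hence $q=1$, i.e. $\dim M=5$; for $\dim M\ge 7$ the non-Sasakian case is excluded and $h\equiv 0$, so $M$ is Sasakian.

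The hard part is precisely this last step: squeezing the sharp dimensional threshold out of the single scalar relation $d\eta\wedge\Psi=0$. Everything before it is formal block algebra, but here one must recognize the hidden quaternion-Hermitian structure carried by $(\phi,h,g)$ on the non-Sasakian block and then use that the defining two-forms of such a structure wedge to zero in real dimension four yet not in dimension eight or higher. Checking the non-vanishing of the relevant cross terms for $q\ge 2$ — equivalently, that $\omega_I$ and $\omega_K$ cannot combine to annihilate $\omega_J$ outside dimension four — is exactly where the positivity of $g$ (used to get the quaternionic normal form) and the bound $\dim M\ge 7$ enter the argument.
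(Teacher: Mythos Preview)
Your argument is essentially correct, but it is far more elaborate than what the paper does, and the extra work yields nothing beyond what a one-line symplectic linear algebra fact already gives.

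The paper's proof is this: by Theorem~\ref{nearly_implies_contact} the form $d\eta$ is nondegenerate on $\ker(\eta)$, which has dimension $2n\ge 6$. For a symplectic form $\omega$ on a space of dimension $2n$, the Lefschetz map $\omega\wedge(-)\colon \Lambda^2\to\Lambda^4$ is injective as soon as $n\ge 3$. Hence $d\eta\wedge\Psi=0$ from Proposition~\ref{strange} forces $\Psi|_{\ker\eta}=0$, and $i_\xi\Psi=0$ finishes the job. No spectral decomposition, no quaternionic normal form, no case analysis.

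What you do instead is decompose $\ker(\eta)$ into eigenbundles of $(\cov\xi)^2$, use the cross-terms of $d\eta\wedge\Psi$ in $\Lambda^2V_\lambda^*\otimes\Lambda^2V_\mu^*$ to reduce to a single eigenvalue, and then build the anticommuting triple $(\phi,h/\sqrt{c},\phi h/\sqrt{c})$ to recognize a quaternion-Hermitian structure and compute $\omega_I\wedge\omega_J$, $\omega_K\wedge\omega_J$ by hand in dimension $4q$. This is correct (your sketchy steps, such as the nonproportionality of $\omega_I\wedge\omega_J$ and $\omega_K\wedge\omega_J$ for $q\ge 2$, do check out), but it amounts to verifying the Lefschetz injectivity \emph{for this particular $\Psi$} via an explicit normal form, rather than invoking it once for all $2$-forms. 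The upside of your route is that it makes transparent why dimension five escapes---the self-dual $2$-forms on $\mathbb{R}^4$ wedge to zero pairwise---and it exhibits the hidden hyper-Hermitian structure on the non-Sasakian block, which is interesting in its own right and connects to the $\mathrm{SU}(2)$-reduction mentioned in the introduction. The downside is that the proof of the theorem itself becomes several pages where the paper needs three sentences.
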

\begin{proof}
In view of Remark~\ref{iff} it is enough to show $\cov_\xi \phi=0$. As
$g$ is non-degenerate this is equivalent to $\Psi=0$. By
Proposition~\ref{nearly_implies_contact} $\eta$ is a contact form on $M$. Therefore $d\eta$ is
a symplectic form on the distribution $\ker(\eta)$. 
The dimension of this distribution is greater than or equal to six. 
Thus the wedge product by $d\eta$ induces an injective map $\bigwedge^2
\ker(\eta) \to \bigwedge^4 \ker(\eta)$. 
By Proposition~\ref{strange} we know that $d\eta \wedge \Psi =0$. 
Therefore the restriction of $\Psi$ to $\bigwedge^2 \ker(\eta)$ is zero. 
It is left to show that $i_\xi \Psi=0$. This follows from the definition of
$\Psi$ and $(\cov_\xi \phi)\xi=0$, which in turn follows from
Proposition~\ref{easyFacts}. 
\end{proof}
\bibliography{proceedings}
\bibliographystyle{amsplain}
\end{document}